\newtheorem{theorem}{Теорема}[section]
\newtheorem{lemma}[theorem]{Лема}
\theoremstyle{definition}
\begin{document}

\selectlanguage{ukrainian} \thispagestyle{empty}
 \pagestyle{myheadings}              

\pagestyle{myheadings}              

УДК 517.5 \vskip 3mm

\noindent \bf А.С. Сердюк  \rm (Інститут математики НАН України, Київ) \\
\noindent \bf Т.А. Степанюк  \rm (Інститут математики НАН України, Київ)

\noindent {\bf A.S. Serdyuk} (Institute of Mathematics NAS of
Ukraine, Kyiv) \\
 \noindent {\bf T.A. Stepaniuk} (Institute of Mathematics NAS of
Ukraine, Kyiv)\\

 \vskip 5mm

{\bf Наближення узагальнених інтегралів Пуассона інтерполяційними  тригонометричними поліномами}

\vskip 5mm

{\bf Approximation of generalized Poisson integrals by interpolation trigonometric polynomials}

\vskip 5mm

 \rm У даній роботі встановлено асимптотично непокращувані інтерполяційні аналоги нерівностей типу Лебега для $2\pi$--періодичних функцій $f$, які представляються узагальненими інтегралами Пуассона  функцій $\varphi$ з простору $L_p$, $1\leq p\leq \infty$. В зазначених нерівностях модулі відхилень  $|f(x)- \tilde{S}_{n-1}(f;x)|$ інтерполяційних поліномів Лагранжа при кожному $x\in\mathbb{R}$ оцінюються через найкращі наближення $E_{n}(\varphi)_{L_{p}}$ функцій $\varphi$ тригонометричними поліномами в $L_{p}$--метриках. Знайдено також асимптотичні рівності для точних верхніх  меж поточкових наближень інтерполяційними тригонометричними поліномами на класах $C^{\alpha,r}_{\beta,p}$ узагальнених інтегралів Пуассона функцій, що належать одиничним кулям просторів $L_p$, $1\leq p\leq\infty$.
 

\vskip 5mm

 \rm In this paper we establish asymptotically best possible interpolation Lebesgue--type inequalities for $2\pi$--periodic functions  $f$, which are representable as generalized Poisson integrals of the functions $\varphi$ from the space  $L_p$, $1\leq p\leq \infty$. In these inequalities the deviation of the interpolation Lagrange polynomials   $|f(x)- \tilde{S}_{n-1}(f;x)|$ for every $x\in\mathbb{R}$ is expressed via the best approximations  $E_{n}(\varphi)_{L_{p}}$ of the functions $\varphi$ be trigonometric polynomials in  $L_{p}$--metrics. We also find asymptotic equalities for the exact upper bounds of points approximations by interpolation trigonometric polynomials  on the classes  $C^{\alpha,r}_{\beta,p}$ of generalized Poisson integrals of the functions, which belong to the unit balls of the spaces $L_p$, $1\leq p\leq\infty$.
 
 \vskip 5mm

{\rm  \large K\,e\,y\,w\,o\,r\,d\,s  \ 
   Generalized Poisson integrals, interpolation Lagrange polynomials, best approximations, Lebesgue inequalities, Fourier sums.

{\bf Mathematics Subject Classification}:  Primary 42A10, 41A17.
\hfill}
 
\newpage

\section{Вступ}\label{intro}

Нехай $L_{p}$,
$1\leq p<\infty$, простір $2\pi$--періодичних сумовних у  $p$-му степені на
 $[0,2\pi)$ функцій $f$, з нормою
\begin{equation*}
\|f\|_{L_p}=\|f\|_{p}:=\Big(\int\limits_{0}^{2\pi}|f(t)|^{p}dt\Big)^{\frac{1}{p}};
\end{equation*}
 $L_{\infty}$ --- простір вимірних і суттєво обмежених   $2\pi$--періодичних функцій $f$ з нормою
 \begin{equation*}
\|f\|_{\infty}=\mathop{\rm{ess}\sup}\limits_{t}|f(t)|;
\end{equation*}
$C$ --- простір неперервних $2\pi$--періодичних функцій  $f$ з нормою
\begin{equation*}
{\|f\|_{C}=\max\limits_{t}|f(t)|}.
\end{equation*}

Через $C^{\alpha,r}_{\beta}\mathfrak{N}, \ \alpha>0, \ r>0, \ \mathfrak{N}\in L_{1}, $ позначимо множину  $2\pi$--періодичних функцій $f(x)$,  які при всіх  
$x\in\mathbb{R}$ можна представити у вигляді згортки  
\begin{equation}\label{conv}
f(x)=\frac{a_{0}}{2}+\frac{1}{\pi}\int\limits_{-\pi}^{\pi}P_{\alpha,r,\beta}(x-t)\varphi(t)dt,
\ a_{0}\in\mathbb{R}, \ \varphi\in\mathfrak{N}, \ \varphi\perp1, 
\end{equation}
з фіксованими ядрами вигляду
\begin{equation}\label{kernel}
P_{\alpha,r,\beta}(t)=\sum\limits_{k=1}^{\infty}e^{-\alpha k^{r}}\cos
\big(kt-\frac{\beta\pi}{2}\big), \ \  \ \alpha,r>0, \ \  \beta\in
    \mathbb{R}.
\end{equation}
Функцію $f$ у рівності \eqref{conv} називають узагальненим інтегралом Пуассона функції $\varphi$ і позначають через $\mathcal{J}^{\alpha,r}_{\beta}\varphi$, з іншого боку функцію $\varphi$ у рівності 
 \eqref{conv} називають узагальненою похідною функції $f$ i позначають через $f^{\alpha,r}_{\beta}$ (тобто, $\varphi(\cdot)=f^{\alpha,r}_{\beta}(\cdot))$. Ядра $P_{\alpha,r,\beta}(\cdot)$ вигляду \eqref{kernel} називають узагальненими ядрами Пуассона.
 
 Зрозуміло, що якщо для заданої функції $\varphi$ виконується рівність \eqref{conv}, то ця ж рівність виконуватиметься і для довільної іншої функції з $L_{1}$, яка може відрізнятись від $\varphi(\cdot)$ на множині міри нуль. Тому надалі рівність $\varphi=f^{\alpha,r}_{\beta}$ домовимось розуміти в тому сенсі, що серед усіх похідних  $f^{\alpha,r}_{\beta}$  є конкретна функція $\varphi$.

 При довільних $r>0$ множини $C^{\alpha,r}_{\beta}\mathfrak{N}$ належать до множини $D^{\infty}$ нескінченно диференційовних $2\pi$--періодичних функцій, тобто $C^{\alpha,r}_{\beta}\mathfrak{N}\subset D^{\infty}$ (див., наприклад, \cite[c. 139]{Stepanets1},  \cite[c. 1408]{Stepanets_Serdyuk_Shydlich2007}). При $r=1$ множини $C^{\alpha,r}_{\beta}\mathfrak{N}$ є множинами звичайних інтегралів Пуассона і складаються із функцій, що допускають регулярне продовження  у смугу $|\mathrm{Im} \, z|< \alpha$ комплексної площини (див., наприклад, \cite[c. 142]{Stepanets1}). При $r>1$  класи $C^{\alpha,r}_{\beta}\mathfrak{N}$ складаються з функцій регулярних в усій комплексній площині (див., наприклад,   \cite[c. 142]{Stepanets1}). Крім того, як випливає з Теореми 1 роботи  \cite{Stepanets_Serdyuk_Shydlich2009} при кожному $r>0$ має місце вкладення $C^{\alpha,r}_{\beta}\mathfrak{N} \subset  J_{1/r} $, де $J_{a}$, $a>0$, --- відомі класи Жевре
 \begin{equation*}
J_{a}=\left\{f\in D^{\infty}: \sup\limits_{k\in\mathbb{N}}\left( \frac{\|f^{(k)} \|_{C}}{(k!)^{a}}\right)^{1/k}<\infty \right\}.
\end{equation*}

Нами вивчаються апроксимативні властивості множин узагальнених інтегралів Пуассона $C^{\alpha,r}_{\beta}\mathfrak{N} $ коли у ролі $\mathfrak{N} $ виступають або усі простори $C$ чи $L_{p}$, $1\leq p\leq \infty$, або одиничні кулі просторів $L_p$, тобто множини $U_{p}=\left\{\varphi\in L_{p}: \ ||\varphi||_{p}\leq 1\right\}$ (далі для зручності класи  $C^{\alpha,r}_{\beta}U_{p}$ будемо позначати через $C^{\alpha,r}_{\beta,p}$), а в якості агрегатів наближення --- класичні інтерполяційні тригонометричні поліноми Лагранжа, що задані непарним числом рівномірно розподілених вузлів.

Для будь--якої $f(x)$ із $ C$ через $\widetilde{S}_{n-1}(f;x)$ будемо позначати тригонометричний поліном порядку $n-1$, що інтерполює $f(x)$ у вузлах $x_{k}^{(n-1)}=\frac{2k\pi}{2n-1}$, $k\in\mathbb{Z}$, тобто такий, що
\begin{equation}\label{InterpolationPolynomS}
\tilde{S}_{n-1}(f;x_{k}^{(n-1)})=f(x_{k}^{(n-1)}), \ k = 0,1, ..., 2n-2.
\end{equation}

Поліноми  $\widetilde{S}_{n-1}(f;\cdot)$ однозначно задаються інтерполяційними умовами \eqref{InterpolationPolynomS}, називаються інтерполяційними поліномами Лагранжа і можуть бути зображені в явному вигляді через ядра Діріхле
\begin{equation*}
D_{n-1}(t)=\frac{1}{2}+\sum\limits_{k=1}^{n-1}\cos kt = \frac{\sin (n-\frac{1}{2})t }{2\sin \frac{t}{2}}
\end{equation*}
наступним чином:
\begin{equation}\label{InterpolationPolynomS_Dirichlet}
\tilde{S}_{n-1}(f;x)= \frac{2}{2n-1}\sum\limits_{k=0}^{2n-2}f(x_{k}^{(n-1)}) D_{n-1}(x-x_{k}^{(n-1)}).
\end{equation}

Нехай $\mathcal{T}_{2n-1}$ --- простір усіх тригонометричних поліномів $t_{n-1}$ порядку $n-1$ і $E_{n}(f)_{L_{p}}$ --- найкраще наближення функції $f\in L_{p}$, $1\leq p\leq \infty$, в $L_{p}$--метриці тригонометричними поліномами $t_{n-1}\in \mathcal{T}_{2n-1}$, тобто величина
\begin{equation*}
E_{n}(f)_{L_{p}}=\inf\limits_{t_{n-1}\in \mathcal{T}_{2n-1}}\|f-t_{n-1}\|_{p}, 
\end{equation*}
а $E_{n}(f)_{C}$ --- найкраще рівномірне наближення функціі $f\in C$ тригонометричними поліномами $t_{n-1}$, тобто величина
\begin{equation*}
E_{n}(f)_{C}=\inf\limits_{t_{n-1}\in \mathcal{T}_{2n-1}}\|f-t_{n-1}\|_{C}.
\end{equation*}

Позначимо через $\tilde{\rho}_{n}(f;\cdot)$ відхилення від функції $f\in C$ її інтерполяційного полінома Лагранжа $\tilde{S}_{n-1}(f;\cdot)$ 
\begin{equation}\label{rhoDef}
 \tilde{\rho}_{n}(f;x)=f(x)- \tilde{S}_{n-1}(f;x).
\end{equation}

Для модулів величин вигляду \eqref{rhoDef} має місце нерівність (див., наприклад, \cite{Korn}, \cite{Stepanets2})
\begin{equation}\label{LebesgueIneq}
\left| f(x)- \tilde{S}_{n-1}(f;x)\right| \leq
(1+ \bar{L}_{n}(x))E_{n}(f)_{C}, \ \ f\in C, \ \ x\in\mathbb{R},
\end{equation}
де
\begin{equation}\label{bar_Ln}
 \bar{L}_{n}(x)
 = \frac{2}{2n-1} \sum\limits_{k=0}^{2n-2}\left|D_{n-1}(x-x_{k}^{(n-1)}) \right|.
\end{equation}

Нерівність \eqref{LebesgueIneq} є інтерполяційним аналогом класичної нерівності Лебега, а функцію $\bar{L}_{n}(x)$ вигляду \eqref{bar_Ln} називають функцією Лебега оператора $\tilde{S}_{n-1}$ вигляду \eqref{InterpolationPolynomS_Dirichlet}.

Асимптотичну поведінку функції Лебега $\bar{L}_{n}(x)$ при $n\rightarrow \infty$ описує наступна формула:
\begin{equation}\label{bar_Ln_Asymp}
 \bar{L}_{n}(x)
 = \frac{2}{\pi} \left| \sin \frac{2n-1}{2}x\right| \ln n + \mathcal{O}(1), \ \ x\in\mathbb{R},
\end{equation}
в якій $ \mathcal{O}(1)$ --- величина, що рівномірно обмежена по $x$ і по $n$. Детальніше  про поведінку констант та функції Лебега оператора  \eqref{kernel} можна ознайомитись у роботах \cite{shakirov2011}, \cite{shakirov2018}.

З урахуванням \eqref{bar_Ln_Asymp} нерівність \eqref{LebesgueIneq} можна записати у вигляді

\begin{equation}\label{LebesgueIneq2}
\left| \tilde{\rho}_{n}(f;x) \right| \leq
\left(\frac{2}{\pi} \left| \sin \frac{2n-1}{2}x\right| \ln n + \mathcal{O}(1) \right)
E_{n}(f)_{C},  \ \ f\in C, \ \ x\in\mathbb{R}.
\end{equation}

Незважаючи на загальність, ця оцінка є асимтотично точною для кожного фіксованого $x\neq \frac{2k\pi}{2n-1}$, $k\in \mathbb{Z}$, на відомих класах $W^{r}$, $r \in \mathbb{N}$, $2\pi$-періодичних функцій, що мають абсолютно неперервні похідні $f^{(k)}$ до $(r-1)$-го порядку включно  і таких, що  $\|f^{(r)}\|_{\infty}\leq 1$. Цей факт випливає із роботи C.М. Нікольського \cite{Nikolsky1945}, в якій на основі  \eqref{LebesgueIneq2} при $r \in \mathbb{N}$ встановлено асимптотичну формулу
\begin{equation}\label{Nikolsky1945}
\widetilde{\mathcal{E}}_{n}(W^{r}_{\infty};x)=
\sup\limits_{f\in W^{r}_{\infty}} \left|  f(x)- \tilde{S}_{n-1}(f;x) \right|
=\frac{2K_{r}}{\pi}\frac{\ln n}{n^{r}} \left| \sin \frac{2n-1}{2}x \right|
+\mathcal{O}\left( \frac{1}{n^{r}}\right),
\end{equation}
де $K_{r}=\frac{4}{\pi}\sum\limits_{v=0}^{\infty}\frac{(-1)^{v(r+1)}}{(2v+1)^{r+1}}$ --- константи Фавара, а величина $\mathcal{O}$ рівномірно обмежена по $x$ і по $n$.

Однак при подальшому збільшенні гладкості і, зокрема, для класів нескінченно диференційовних, аналітичних чи цілих функцій, оцінки відхилень $\left| \tilde{\rho}_{n}(f;x) \right|$, що базуються на використанні \eqref{LebesgueIneq} (чи \eqref{LebesgueIneq2}), перестають бути асимптотично точними і навіть можуть бути не точними за порядком.

Точні порядкові оцінки  $\left \| \tilde{\rho}_{n}(f;x) \right \|_{C}$ на класах 
$$C(\varepsilon)= \left\{ f\in C: \ E_{k}(f)_{C}\leq \varepsilon_{k}, \ \ k\in\mathbb{N} \right\}
$$
 та 
\begin{equation*}
L_{p}(\varepsilon)= \left\{ f\in L_{p}: \ E_{k}(f)_{L_{p}}\leq \varepsilon_{k}, \ \ k\in\mathbb{N} \right\}, \ 1<p<\infty, \ \ \sum\limits_{k=1}^{\infty}\frac{\varepsilon_{k+1}}{k^{1-\frac{1}{p}}}<\infty, \
\end{equation*}
які задаються монотонно прямуючими до нуля послідовностями $\varepsilon=\left\{\varepsilon_{k} \right\}_{k=1}^{\infty}$ невід'ємних чисел, були знайдені у роботах \cite{Oskolkov} та \cite{Sharapudinov}.

У даній роботі для функцій з множин узагальнених інтегралів Пуассона  $C^{\alpha,r}_{\beta}L_{p}$, $\alpha>0$, $r\in(0,1)$, $\beta \in \mathbb{R}$, $1\leq p\leq \infty$, встановлено інтерполяційні аналоги нерівностей типу Лебега, в яких оцінки зверху величин $|\tilde{\rho}_{n}(f;x)|$
виражаються через найкращі наближення $E_{n}(f^{\alpha,r}_{\beta})_{L_{p}}$. 
Також в ній доведено асимптотичну непокращуваність отриманих нерівностей на множинах $C^{\alpha,r}_{\beta}L_{p}$. Слід зауважити, що при $p=\infty$ такі нерівності були встановлені в роботі \cite[Теорема 3]{Serdyuk2004}.

 Крім того, в даній роботі при всіх $x\in\mathbb{R}$, $\alpha>0$, $\beta\in\mathbb{R}$, $r\in(0,1)$,  $1\leq p\leq \infty$, розв'язано задачу Колмогорова-Нікольського для інтерполяційних поліномів Лагранжа $ \tilde{S}_{n-1}(f;x) $ вигляду \eqref{InterpolationPolynomS_Dirichlet}  на класах узагальнених інтегралів Пуассона, тобто встановлено асмиптотичні при
 $n\rightarrow\infty$ рівності для величин
\begin{equation}\label{quantityInterpol}
\tilde{\mathcal{E}}_{n}(C^{\alpha,r}_{\beta,p};x)=\sup\limits_{f\in C^{\alpha,r}_{\beta,p} } \left| \tilde{\rho}_{n}(f;x) \right|.
\end{equation}

Зазначимо, що при $r \geq 1$, $1\leq p\leq \infty$, асимптотичні рівності для зазначених величин були знайдені в роботах \cite{SerdyukDopov1999}, \cite{StepanetsSerdyuk2000},  \cite{Serdyuk2012}, \cite{SerdyukVoitovych2010}.

У роботі  \cite{StepanetsSerdyuk2000} було показано, що якщо $r=1$, $p=\infty$, $\alpha>0$, $\beta\in \mathbb{R}$,  $x\in\mathbb{R}$, то при $n\rightarrow\infty$ має місце асимптотична рівність
\begin{align}\label{StepanetsSerdyukr=1p=infty}
\tilde{\mathcal{E}}_{n}(C^{\alpha,1}_{\beta,\infty};x)
=
e^{-\alpha n^{r}} \left|\sin \frac{2n-1}{2}x \right| 
\left(
\frac{16}{\pi^{2}} \mathbf{K}(e^{-\alpha})
+ \mathcal{O}(1)\frac{e^{-\alpha n}}{n(1-e^{-\alpha n})}
\right),
\end{align}
в якій $ \mathbf{K}(q)=\int\limits_{0}^{\frac{\pi}{2}}\frac{du}{\sqrt{1-q^{2}\sin^{2}u}}$ --- повний еліптичний інтеграл першого роду, а $\mathcal{O}(1)$ --- величина рівномірно обмежена по  $n$, $x$, $\alpha$ i $\beta$.

Як випливає з \cite{Serdyuk2012},  для величин вигляду \eqref{quantityInterpol} при всіх $\alpha>0$ i $\beta \in \mathbb{R}$ у випадку $r=1$ i $1<p\leq\infty$  виконується асимптотична при $n\rightarrow\infty$ рівність
\begin{align}\label{quantityInterpolr=1}
&\tilde{\mathcal{E}}_{n}(C^{\alpha,1}_{\beta,p};x) \notag \\
=&
e^{-\alpha n} \! \left|\sin \frac{2n\!-\!1}{2}x \right| \! \! \left(\frac{2}{\pi}\| \cos t\|_{p'}F^{1/p'}\left(\frac{p'}{2},  \frac{p'}{2}; 1; e^{-2\alpha} \right) \! + \!
\mathcal{O}(1)\frac{e^{-\alpha}}{n(1\!-\!e^{-\alpha})^{s(p)}}  \right),  x\in\mathbb{R}
\end{align}
в якій $\frac{1}{p}+\frac{1}{p'}=1$, $F(a,b;c;z)$ --- гіпергеометрична функція Гаусса
\begin{equation*}
F(a,b;c;z)=1+\sum\limits_{k=1}^{\infty}\frac{(a)_{k}(b)_{k}}{(c)_{k}}\frac{z^{k}}{k!},
\end{equation*}
\begin{equation*}
(y)_{k}:=y(y+1)(y+2)...(y+k-1),
\end{equation*}
 $s(p)$ задається формулою
\begin{equation*}
s(p)={\left\{\begin{array}{cc}
 1,  & p=\infty, \\
2, & 1\leq  p<\infty,
  \end{array} \right.}
\end{equation*}
а у випадку $r=1$ i $p=1$ --- рівність
\begin{equation}\label{quantityInterpolr=1p=1}
\tilde{\mathcal{E}}_{n}(C^{\alpha,1}_{\beta,1};x)=
e^{-\alpha n}\left|\sin \frac{2n-1}{2}x \right|\left(\frac{2}{\pi} \frac{1}{1-e^{-\alpha}} +
\mathcal{O}(1)\frac{e^{-\alpha}}{n(1-e^{-\alpha})^{2}}  \right), \ \ x\in\mathbb{R}.
\end{equation}

У формулах \eqref{quantityInterpolr=1} i \eqref{quantityInterpolr=1p=1} величини $\mathcal{O}(1)$ рівномірно обмежені відносно параметрів $x$, $n$, $\beta$, $\alpha$ i $p$.

Оскільки при $p=\infty$ ($p'=1$) $\|\cos t\|_{p'}=\|\cos t \|_{1}=4$
і
\begin{equation*}
F^{\frac{1}{p'}}\left(\frac{p'}{2},\frac{p'}{2};1;  e^{-2\alpha} \right)=
F  \left(\frac{1}{2},\frac{1}{2};1; e^{-2\alpha} \right)=\frac{2}{\pi}\mathbf{K}(e^{-\alpha}),
\end{equation*}
то з \eqref{quantityInterpolr=1} випливає \eqref{StepanetsSerdyukr=1p=infty}.

Зауважимо також, що  в роботі \cite{SerdyukSokolenko2016} для величини виду \eqref{quantityInterpol} при $r=1$, $p=2$,  $\alpha>0$, $\beta\in\mathbb{R}$ i $n\in\mathbb{N}$ встановлено рівність
\begin{align}\label{quantityInterpolr=1p=2}
&\tilde{\mathcal{E}}_{n}(C^{\alpha,r}_{\beta,2};x)
\notag \\
=&e^{-\alpha n}\left|\sin \frac{2n-1}{2}x \right| \frac{2}{\sqrt{\pi(1-e^{-2\alpha})}}
\left(
\frac{1+e^{-2\alpha(2n-1)}}{1-2e^{-2\alpha(2n-1)}\cos(2n-1)x+e^{-4\alpha(2n-1)}}
\right)^{\frac{1}{2}},  \ x\in\mathbb{R}.
\end{align}

Більше того, як випливає з \cite{SerdyukSokolenko2016} i  \cite{SerdyukSokolenko2017},  при $p=2$ та всіх $r>0$, $\alpha>0$, $\beta\in\mathbb{R}$ i $n\in\mathbb{N}$ для величин $\tilde{\mathcal{E}}_{n}(C^{\alpha,r}_{\beta,2};x)$ має місце рівність
\begin{equation}\label{quantityInterpolp=2}
\tilde{\mathcal{E}}_{n}(C^{\alpha,r}_{\beta,2};x)=
\frac{2}{\sqrt{\pi}}\left(\sum\limits_{m=1}^{\infty}\sin^{2}\frac{(2n-1)mx}{2}\sum\limits_{k=m(2n-1)-n+1}^{m(2n-1)+n-1}e^{-2\alpha k^{r}} \right)^{\frac{1}{2}}
, \ \ x\in\mathbb{R}
\end{equation}
що є справедливою при всіх $\beta\in\mathbb{R}$ i $n\in\mathbb{N}$.

У випадку $r>1$, як випливає з \cite{Serdyuk2012}, \cite{SerdyukVoitovych2010}, для величин $\tilde{\mathcal{E}}_{n}(C^{\alpha,r}_{\beta,p};x)$, $\alpha>0$, $\beta\in\mathbb{R}$ при $p=\infty$ має місце асимптотична при $n\rightarrow\infty$ рівність
\begin{align}\label{quantityInterpolp=infty}
&\tilde{\mathcal{E}}_{n}(C^{\alpha,r}_{\beta,\infty};x)
\notag \\
=&e^{-\alpha n^{r}}\left|\sin \frac{2n-1}{2}x \right| 
\left(
\frac{8}{\pi}+
\mathcal{O}(1)\left(\frac{e^{2\alpha n^{r}}}{e^{2\alpha(n+1)^{r}}}+
\left(1+\frac{1}{\alpha r(n+2)^{r-1}} \right)\frac{e^{\alpha n^{r}}}{e^{\alpha(n+1)^{r}}} \right)
\right),  x\in\mathbb{R}.
\end{align}
а при $1\leq p<\infty$ --- рівність
\begin{align}\label{quantityInterpolp<infty}
&\tilde{\mathcal{E}}_{n}(C^{\alpha,r}_{\beta,p};x)
\notag \\
=&e^{-\alpha n^{r}}\left|\sin \frac{2n-1}{2}x \right| 
\left(
\frac{2}{\pi}\|\cos t \|_{p'}+
\mathcal{O}(1)\left(1+\frac{1}{\alpha r(n+1)^{r-1}} \right)\frac{e^{\alpha n^{r}}}{e^{\alpha(n+1)^{r}}} 
\right),  x\in\mathbb{R}.
\end{align}

У формулах \eqref{quantityInterpolp=infty} i \eqref{quantityInterpolp<infty} величини $\mathcal{O}(1)$ рівномірно обмежені по $x$, $n$, $r$, $\alpha$, $\beta$ i $p$.

Зазначимо також, що в роботі \cite{SerdyukSokolenko2019} для класів $C^{\alpha,r}_{\beta,1}$, $\alpha>0$, $r>1$, $\beta\in\mathbb{R}$ встановлено і асимптотичні рівності для точних верхніх меж відхилень інтерполяційних поліномів $\tilde{S}_{n-1}(f;\cdot)$ в довільних $L_{p}$--метриках ($1\leq p \leq \infty$).

Що ж стосується випадку $0<r<1$, то асимптотичні рівності для величин 
$\tilde{\mathcal{E}}_{n}(C^{\alpha,r}_{\beta,p};x)$, $\alpha>0$, $\beta\in\mathbb{R}$, за виключенням наведеного вище випадку $p=2$, були відомі лише у випадку  $p=\infty$ завдяки роботам \cite{StepanetsSerdyuk2000Zb} та \cite{Serdyuk2004},  з яких випливає, що при $n\rightarrow\infty$
\begin{equation}\label{quantityInterpolp=infty1}
\tilde{\mathcal{E}}_{n}(C^{\alpha,r}_{\beta,\infty};x)
=e^{-\alpha n}\left|\sin \frac{2n-1}{2}x \right| 
\left(
\frac{8}{\pi^{2}}\ln n^{1-r}+
\mathcal{O}(1)
\right),  \ \ x\in\mathbb{R},
\end{equation}
де $\mathcal{O}(1)$ --- величина рівномірно обмежена по $x$, $n$ i $\beta$.

В даній роботі буде доведено зокрема, що для довільних $0<r<1$, $\alpha>0$, $\beta\in\mathbb{R}$,  i $x\in\mathbb{R}$ при $1<p<\infty$ та  $n\rightarrow\infty$ має місце асимптотична рівність

\begin{align}\label{quantityInterpol1<p<infty}
&\tilde{\mathcal{E}}_{n}(C^{\alpha,r}_{\beta,p};x)
\notag \\
=&e^{-\alpha n^{r}}n^{\frac{1-r}{p}}\left|\sin \frac{2n-1}{2}x \right| 
\left(
\frac{2\|\cos t \|_{p'}}{\pi^{1+\frac{1}{p'}}(\alpha r)^{\frac{1}{p}}}F^{\frac{1}{p'}}\left(\frac{1}{2},\frac{3-p'}{2};\frac{3}{2};1 \right)+
\mathcal{O}(1) \frac{1}{n^{\min \{r, \frac{1-r}{p} \}}}
\right),  
\end{align}
де $\frac{1}{p}+\frac{1}{p'}=1$,  $F(a,b;c;z)$ --- гіпергеометрична функція Гаусса, $\mathcal{O}(1)$ --- величина рівномірно обмежена по $x$, $n$ i $\beta$, а при $p=1$ --- рівність
\begin{align}\label{quantityInterpolp=1_1}
\tilde{\mathcal{E}}_{n}(C^{\alpha,r}_{\beta,1};x) 
= e^{-\alpha n^{r}}n^{1-r}
\left|\sin \frac{2n-1}{2}x \right| 
\Big(
\frac{2}{\pi\alpha r}+\mathcal{O}(1) \frac{1}{n^{\min \{r, 1-r \}}}\Big).
\end{align}
 При цьому у роботі в явному вигляді записано оцінки залишкового члена у формулах \eqref{quantityInterpol1<p<infty} i \eqref{quantityInterpolp=1_1} через параметри задачі, що може бути корисним для практичного застосування отриманих в ній результатів. Отже, на класах узагальнених інтегралів  Пуассона $C^{\alpha,r}_{\beta,p}$ при всіх $\alpha>0$, $r>0$, $\beta\in \mathbb{R}$ i $1\leq p \leq\infty$ повністю розв'язано задачу Колмогорова-Нікольського для інтерполяційних поліномів Лагранжа, яка полягає  у встановленні для кожного $x\in\mathbb{R}$ сильної асимптотики величин $\tilde{\mathcal{E}}_{n}(C^{\alpha,r}_{\beta,p};x)$ вигляду \eqref{quantityInterpol} при $n\rightarrow\infty$.

Головний член $A_{n}$ в асимптотичному розкладі величини \eqref{quantityInterpol}, поданому у вигляді
\begin{equation*}
 \widetilde{{\cal E}}_{n}(C^{\alpha,r}_{\beta,p};x)=e^{-\alpha n^{r}}\Big|\sin\frac{2n-1}{2}x\Big|(A_n+o(A_n)).
\end{equation*}
природно назвати константами Колмогорова-Нікольського для інтерполяційних поліномів Лагранжа на класах $C^{\alpha,r}_{\beta,p}$. Наступна таблиця містить точні значення зазначених констант  в залежності від співвідношень  між параметрами  $r$ i $p$:

\begin{center}\label{Tabl}

\begin{small}
\begin{tabular}{|c|c|c|c|c|}
	\hline
	\multicolumn{2}{|c|}{}  & \multicolumn{3}{|c|}{\boldmath{$r$}}  \\
\cline{3-5}
\multicolumn{2}{|c|}{\raisebox{1.5ex}[0cm][0cm]{$A_n$}}   & \boldmath{$(0,1)$} & \boldmath{$1$} & \boldmath{$(1,\infty)$} \\
\hline
 \ &  & Степанець, Сердюк (2000)   \cite{StepanetsSerdyuk2000Zb}    &  Степанець, Сердюк (2000)  \cite{StepanetsSerdyuk2000}  & Сердюк (1999)  \cite{SerdyukDopov1999}    \\
  \ & \boldmath{$\infty$} &  Сердюк  (2004)    \cite{Serdyuk2004}   &    &  Степанець,  \\
  \ &  &       &    &  Сердюк   (2000) ] \cite{StepanetsSerdyuk2000}    \\
       &   & \raisebox{-0.9ex}[0cm][0cm]{ $\frac{8}{\pi^{2}}(1-r)\ln n$   } & \raisebox{-0.9ex}[0cm][0cm]{$\frac{16}{\pi^{2}} {\bf K} (e^{-\alpha})$} &  \raisebox{-0.9ex}[0cm][0cm]{$\frac{8}{\pi}$ }\\
     \tiny{ \ }  &  \tiny{ \ }  &   \tiny{ \ } & \tiny{ \ } &  \tiny{ \ }  \\
\cline{2-5}
 \raisebox{-1.4ex}[0cm][0cm]{ \boldmath{$p$} }& \ 
 &    \textbf{Результати авторів роботи}     &   Сердюк (2012) \cite{Serdyuk2012}  & Сердюк, \\
  & \boldmath{$(1, \ \infty)$}  &     & 
  & Войтович (2010) \cite{SerdyukVoitovych2010}  \\
                       &  &   \boldmath{$ n^{\frac{1-r}{p}} 
\frac{2\|\cos t \|_{p'}}{\pi^{1+\frac{1}{p'}}(\alpha r)^{\frac{1}{p}}}F^{\frac{1}{p'}}\left(\frac{1}{2},\frac{3-p'}{2};\frac{3}{2};1 \right)$}   &    $\frac{2\|\cos t\|_{p'}}{\pi} F^{\frac{1}{p'}}(\frac{p'}{2}, \frac{p'}{2}; 1; e^{-2\alpha})$                        & $\frac{2\|\cos t\|_{p'}}{\pi}$ \\
  &  &    &                             &                      \\
 \cline{2-5}
   & 
   &    \textbf{Результати авторів роботи}  &  Сердюк (2012) \cite{Serdyuk2012}  &  Сердюк,   \\
   & \boldmath{$1$} &   \raisebox{-0.9ex}[0cm][0cm]{ } & \raisebox{-0.9ex}[0cm][0cm] 
   & \raisebox{-0.9ex}[0cm][0cm]{Войтович (2010)  \cite{SerdyukVoitovych2010} }\\
                        &  & \boldmath{ $ n^{1-r}
\frac{2}{\pi\alpha r}$} & {$\frac{2}{\pi(1-e^{-\alpha})} $}  & \raisebox{-0.9ex}[0cm][0cm]{$\frac{2}{\pi}$}\\
  \tiny{ \ }  &  \tiny{ \ }  &   \tiny{ \ } & \tiny{ \ } &  \tiny{ \ } \\
 \hline
\end{tabular}
\end{small}
\end{center}

\section{Нерівності типу Лебега для інтерполяційних поліномів Лагранжа на множинах узагальнених інтегралів Пуассона}

При довільних фіксованих $\alpha>0$, $r\in(0,1)$ i $1\leq p\leq\infty$ позначимо через $n_{*}=n_{*}(\alpha,r,p)$ найменший з номерів $n$ такий, що

\begin{equation}\label{n_star}
 \frac{\ln \pi n}{\alpha rn^{r}}+\frac{\alpha r \chi(p)}{n^{1-r}}\leq{\left\{\begin{array}{cc}
 \frac{1}{14},  & p=1, \\
\frac{1}{(3\pi)^3}\cdot\frac{p-1}{p}, & 1< p<\infty, \\
\frac{1}{(3\pi)^3},  & p=\infty,
  \end{array} \right.}
\end{equation}
де $\chi(p)=p$ при $1\leq p<\infty$ i $\chi(p)=1$ при $p=\infty$.

\begin{theorem}\label{theorem_1<p<infty}
Нехай $0<r<1$,  $\alpha>0$, $1< p<\infty$, $\beta\in\mathbb{R}$ і $n\in \mathbb{N}$. Тоді, для всіх $x\in\mathbb{R}$ і довільної  функції
 $f\in C^{\alpha,r}_{\beta}L_{p}$ при $n\geq n_{*}(\alpha,r,p)$  має місце нерівність
\begin{align}\label{Theorem_Case1<p<infty}
&|\tilde{\rho}_{n}(f;x)|
\leq
2e^{-\alpha n^{r}}n^{\frac{1-r}{p}}\left|\sin \frac{2n-1}{2}x \right| 
\left(
\frac{\|\cos t \|_{p'}}{\pi^{1+\frac{1}{p'}}(\alpha r)^{\frac{1}{p}}}F^{\frac{1}{p'}}\left(\frac{1}{2},\frac{3-p'}{2};\frac{3}{2};1 \right) \right.
\notag \\
+&\left.
\gamma^{*}_{n,p}\left( 
\left(1+\frac{(\alpha r)^{\frac{p'-1}{p}}}{p'-1}\right)\frac{1}{n^{\frac{1-r}{p}}}
+\frac{p^{\frac{1}{p'}}}{(\alpha r)^{1+\frac{1}{p}}n^{r}}
\right)
\right) E_{n}(f^{\alpha,r}_{\beta})_{L_{p}}.
\end{align}

Крім того, для довільної функції $f\in C^{\alpha,r}_{\beta}L_{p}$ можна вказати функцію $\mathcal{F}(\cdot)=\mathcal{F}(f;n;x, \cdot)$, таку, що $E_{n}(\mathcal{F}^{\alpha,r}_{\beta})_{L_p}=E_{n}(f^{\alpha,r}_{\beta})_{L_p}$  і для $n\geq n_{*}(\alpha, r,p)$ виконується наступна рівність:
\begin{align}\label{Theorem_Case1<p<inftyEquality}
&|\tilde{\rho}_{n}(\mathcal{F}; x)|
=
2e^{-\alpha n^{r}}n^{\frac{1-r}{p}}\left|\sin \frac{2n-1}{2}x \right| 
\left(
\frac{\|\cos t \|_{p'}}{\pi^{1+\frac{1}{p'}}(\alpha r)^{\frac{1}{p}}}F^{\frac{1}{p'}}\left(\frac{1}{2},\frac{3-p'}{2};\frac{3}{2};1 \right) \right.
\notag \\
+&\left.
\gamma^{*}_{n,p}\left( 
\left(1+\frac{(\alpha r)^{\frac{p'-1}{p}}}{p'-1}\right)\frac{1}{n^{\frac{1-r}{p}}}
+\frac{p^{\frac{1}{p'}}}{(\alpha r)^{1+\frac{1}{p}}n^{r}}
\right)
\right) E_{n}(f^{\alpha,r}_{\beta})_{L_{p}} .
\end{align}

В \eqref{Theorem_Case1<p<infty}  і \eqref{Theorem_Case1<p<inftyEquality}  величини ${\gamma_{n,p}^{*}=\gamma_{n,p}^{*}(\alpha,r,\beta,f,x)}$ такі, що ${|\gamma_{n,p}^{*}|<20\pi^{4}}$.
\end{theorem}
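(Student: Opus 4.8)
The plan is to reduce the pointwise deviation to a single convolution against an explicit kernel and then to estimate the $L_{p'}$-norm of that kernel. First I would use that $\widetilde{S}_{n-1}$ is linear and reproduces every $t_{n-1}\in\mathcal{T}_{2n-1}$ (in particular the constant $a_0/2$), so that for $f\in C^{\alpha,r}_{\beta}L_{p}$ with $f^{\alpha,r}_{\beta}=\varphi$ one may write $\tilde{\rho}_{n}(f;x)=\frac{1}{\pi}\int_{-\pi}^{\pi}\varphi(t)\,\tilde{G}_{n}(x,t)\,dt$, where $\tilde{G}_{n}(x,t)=P_{\alpha,r,\beta}(x-t)-\widetilde{S}_{n-1}\big(P_{\alpha,r,\beta}(\cdot-t);x\big)$ is the interpolation deviation of the generalized Poisson kernel. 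Passing to Fourier harmonics and using the aliasing rule $\widetilde{S}_{n-1}(e^{ik\cdot})=e^{i\tilde{k}\cdot}$ with $\tilde{k}\equiv k\pmod{2n-1}$, $|\tilde{k}|\le n-1$, shows that $\tilde{G}_{n}(x,\cdot)$ contains only harmonics of order $\ge n$, hence $\tilde{G}_{n}(x,\cdot)\perp\mathcal{T}_{2n-1}$. Therefore $\varphi$ may be replaced by $\varphi-t_{n-1}$ for arbitrary $t_{n-1}\in\mathcal{T}_{2n-1}$, and Hölder's inequality followed by taking the infimum yields $|\tilde{\rho}_{n}(f;x)|\le\frac{1}{\pi}\|\tilde{G}_{n}(x,\cdot)\|_{p'}\,E_{n}(f^{\alpha,r}_{\beta})_{L_{p}}$. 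Everything then hinges on the sharp asymptotics of $\|\tilde{G}_{n}(x,\cdot)\|_{p'}$.

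For the norm I would isolate the dominant aliasing band $n\le k\le 2n-2$, where $\tilde{k}=k-(2n-1)$ and $e^{ikx}-e^{i\tilde{k}x}=2i\sin\frac{2n-1}{2}x\;e^{i(k-\frac{2n-1}{2})x}$; this is exactly where the factor $\big|\sin\frac{2n-1}{2}x\big|$ comes out. After factoring $e^{-\alpha n^{r}}$ and writing $k=n+j$, the weights become $e^{-\alpha((n+j)^{r}-n^{r})}\approx e^{-\lambda j}$ with $\lambda=\alpha r n^{r-1}$, so the $t$-section of the kernel collapses to $e^{-\alpha n^{r}}|\sin\frac{2n-1}{2}x|\,\mathrm{Im}\big[e^{i\psi(t)}(1-e^{-\lambda+i(x-t)})^{-1}\big]$, i.e. a rapidly oscillating phase of frequency $n$ modulated by a slowly varying Poisson-type envelope $A(t)=(1-e^{-\lambda+i(x-t)})^{-1}$. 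Averaging $|\sin|^{p'}$ of the fast phase against the slow envelope produces the factor $\frac{1}{2\pi}\|\cos t\|_{p'}^{p'}$, while $\int_{-\pi}^{\pi}|A(t)|^{p'}dt\approx\lambda^{1-p'}\int_{-\infty}^{\infty}(1+u^{2})^{-p'/2}du$, and Gauss's evaluation of this Beta integral supplies $2F(\frac{1}{2},\frac{3-p'}{2};\frac{3}{2};1)$. Since $\lambda^{-1/p}=(\alpha r)^{-1/p}n^{(1-r)/p}$, collecting constants reproduces precisely the leading term $2e^{-\alpha n^{r}}n^{(1-r)/p}|\sin\frac{2n-1}{2}x|\frac{\|\cos t\|_{p'}}{\pi^{1+1/p'}(\alpha r)^{1/p}}F^{1/p'}(\frac{1}{2},\frac{3-p'}{2};\frac{3}{2};1)$ of \eqref{Theorem_Case1<p<infty}.

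The main obstacle is turning these heuristics into a non-asymptotic bound with the explicit remainder $\gamma^{*}_{n,p}$, $|\gamma^{*}_{n,p}|<20\pi^{4}$. I would track four sources of error: the higher-order Taylor remainder in $(n+j)^{r}-n^{r}-\lambda j$ (responsible for the $\tfrac{p^{1/p'}}{(\alpha r)^{1+1/p}n^{r}}$ term); the replacement of the discrete sum by the Poisson kernel and of the latter's $L_{p'}$-norm by the Beta integral, together with the truncation of the envelope away from its peak (giving the $\big(1+\tfrac{(\alpha r)^{(p'-1)/p}}{p'-1}\big)n^{-(1-r)/p}$ term, with the $1/(p'-1)$ reflecting the $\Gamma(\tfrac{p'-1}{2})$ in the Beta factor); the contribution of the higher bands $k\ge 2n-1$ and of the negative frequencies, each carrying an extra factor $e^{-\alpha((2n-1)^{r}-n^{r})}$; and the discretization error from summing over the $2n-1$ nodes. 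Each estimate is elementary but must be made uniform in $x$, $n$, $\beta$, and $p$; the hypothesis $n\ge n_{*}(\alpha,r,p)$ in \eqref{n_star} is precisely what guarantees that $\lambda$ is small enough and the tails negligible, so that all four contributions fit inside the single factor $\gamma^{*}_{n,p}$ with the stated absolute constant.

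Finally, for the equality \eqref{Theorem_Case1<p<inftyEquality} I would exhibit the extremizer of the Hölder step. Fix a best approximation $t^{*}_{n-1}\in\mathcal{T}_{2n-1}$ of $\varphi=f^{\alpha,r}_{\beta}$ and set $\mathcal{F}^{\alpha,r}_{\beta}(t):=t^{*}_{n-1}(t)+c\,|\tilde{G}_{n}(x,t)|^{p'-1}\operatorname{sgn}\tilde{G}_{n}(x,t)$ (after subtracting its mean to enforce $\varphi\perp 1$), with $c$ chosen so that $\|\mathcal{F}^{\alpha,r}_{\beta}-t^{*}_{n-1}\|_{p}=E_{n}(f^{\alpha,r}_{\beta})_{L_{p}}$. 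Because $(p'-1)(p-1)=1$, the function $|\mathcal{F}^{\alpha,r}_{\beta}-t^{*}_{n-1}|^{p-1}\operatorname{sgn}(\cdot)$ is a constant multiple of $\tilde{G}_{n}(x,\cdot)$, which is orthogonal to $\mathcal{T}_{2n-1}$; hence $t^{*}_{n-1}$ remains a best approximation and $E_{n}(\mathcal{F}^{\alpha,r}_{\beta})_{L_{p}}=E_{n}(f^{\alpha,r}_{\beta})_{L_{p}}$. This same alignment turns the Hölder inequality into an equality, so $|\tilde{\rho}_{n}(\mathcal{F};x)|=\frac{1}{\pi}\|\tilde{G}_{n}(x,\cdot)\|_{p'}E_{n}(f^{\alpha,r}_{\beta})_{L_{p}}$, which is exactly the right-hand side of \eqref{Theorem_Case1<p<infty} with the same $\gamma^{*}_{n,p}$.
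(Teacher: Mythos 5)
Your architecture for the upper bound \eqref{Theorem_Case1<p<infty} is essentially the paper's: the aliasing representation of $\tilde{\rho}_{n}(f;x)$ as a convolution of $f^{\alpha,r}_{\beta}$ against a kernel orthogonal to $\mathcal{T}_{2n-1}$ (the paper's formula \eqref{IntegrRepr}, quoted from Lemma~1 of \cite{StepanetsSerdyuk2000}), the resulting freedom to subtract an arbitrary $t_{n-1}$, and H\"older's inequality. The difference is in how the two quantitative inputs are handled. The paper does not re-derive the norm asymptotics of $\frac{1}{\pi}\big\|\sum_{k=n}^{\infty}e^{-\alpha k^{r}}\cos(kt+\xi)\big\|_{p'}$: it imports \eqref{EstimNorm1} and \eqref{estimateIp} from \cite{SerdyukStepanyukDopov}--\cite{SerdyukStepanyuk2017}, and it controls the higher aliasing bands by a separate Lemma~\ref{Lemma1} with explicit numerical constants. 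Your Laplace-type heuristic does reproduce the correct leading constant $\frac{\|\cos t\|_{p'}}{\pi^{1+1/p'}(\alpha r)^{1/p}}F^{1/p'}\big(\frac12,\frac{3-p'}{2};\frac32;1\big)$, but, as you concede, it is not carried to the point of an explicit remainder with $|\gamma^{*}_{n,p}|<20\pi^{4}$ uniformly in $x$, $n$, $\beta$, $p$. That is the one genuine gap: either cite the known two-sided estimates as the paper does, or the four error sources you list must actually be bounded with numerical constants under the hypothesis \eqref{n_star} --- a substantial computation that your sketch defers rather than performs.

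For the equality \eqref{Theorem_Case1<p<inftyEquality} you take a genuinely different route. You build the extremizer directly: $\mathcal{F}^{\alpha,r}_{\beta}=t^{*}_{n-1}+c\,|\tilde{G}_{n}(x,\cdot)|^{p'-1}\operatorname{sgn}\tilde{G}_{n}(x,\cdot)$, with the duality characterization of best $L_{p}$-approximation ($1<p<\infty$) guaranteeing that $t^{*}_{n-1}$ stays optimal because $\tilde{G}_{n}(x,\cdot)\perp\mathcal{T}_{2n-1}$; this is correct and self-contained. The paper instead transfers the problem to the Fourier-sum setting: it forms $g_{x}$ in \eqref{g_x}, invokes the extremal function $G$ of Theorem~1 of \cite{SerdyukStepanyukFilomat} with $E_{n}(G^{\alpha,r}_{2\gamma_{n}/\pi})_{L_{p}}=E_{n}(f^{\alpha,r}_{\beta})_{L_{p}}$, evaluates at the maximum point $x_{0}$ of $\rho_{n}(G;\cdot)$ and shifts, \eqref{F_equality1}, so that the constant in the equality is inherited from the already-proved Lebesgue inequality for Fourier sums. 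Your route still requires splitting the norm of the full kernel into the main sum plus the $r_{n}$-correction, i.e.\ the same two inputs as above. Two details to tighten: the $\gamma^{*}_{n,p}$ in \eqref{Theorem_Case1<p<inftyEquality} need not literally coincide with the one in \eqref{Theorem_Case1<p<infty} (both are merely bounded by $20\pi^{4}$, as in the paper's own \eqref{EstimateRho_F}); and at the nodes $x=\frac{2k\pi}{2n-1}$ your kernel vanishes identically, so the construction degenerates there (harmlessly, since both sides of the equality are zero).
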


\begin{proof}[Доведення Теореми~\ref{theorem_1<p<infty}]

Згідно з Лемою 1 роботи \cite{StepanetsSerdyuk2000} для довільної функції $f\in C^{\alpha,r}_{\beta}L_{p}$, $1\leq p\leq\infty$, $\alpha>0$, $r>0$, $\beta\in\mathbb{R}$ у кожній точці $x\in\mathbb{R}$ має місце наступне інтегральне зображення величини $\tilde{\rho}_{n}(f;x)$:
\begin{equation}\label{IntegrRepr}
\tilde{\rho}_{n}(f;x)=\frac{2}{\pi}\sin\frac{2n-1}{2}x 
\int\limits_{-\pi}^{\pi}\delta_{n}(t+x)
\left(\sum\limits_{k=n}^{\infty}e^{-\alpha k^{r}}\cos (kt+\gamma_{n})+r_{n}(t)
 \right) dt,
\end{equation}
в якому $\delta_{n}(\tau)= f^{\alpha,r}_{\beta}(\tau)-t_{n-1}(\tau)$, $t_{n-1}$ --- довільний тригонометричний поліном із множини $\mathcal{T}_{2n-1}$, а $r_{n}$ i $\gamma_{n}$ означені за допомогою рівностей 
\begin{equation}\label{rn}
r_{n}(t)=r_{n}(\alpha;r;\beta;x;t)=
\sum\limits_{k=1}^{\infty}\sum\limits_{\nu=(2k+1)n-k}^{\infty}\!\!\! e^{-\alpha \nu^{r}}\sin\left(\nu t+ \left(k+\frac{1}{2} \right)(2n-1)x+ \frac{\beta\pi}{2}\right),
\end{equation}
\begin{equation}\label{gamma_n}
\gamma_{n}=\gamma_{n}(\beta;x)=
\frac{(2n-1)x+\pi(\beta-1)}{2}.
\end{equation}

Для знаходження оцінки зверху абсолютної величини залишкового члена $r_{n}(t)$ у формулі \eqref{IntegrRepr} нам буде корисним наступне твердження.

\begin{lemma}\label{Lemma1}
Нехай $\alpha>0$, $r\in(0,1)$, а номер $n$, $n\in\mathbb{N}$ такий, що виконується нерівність
\begin{equation}\label{NumberIneq}
\frac{1}{\alpha r n^{r}}+\frac{\alpha r}{n^{r-1}}\leq \frac{1}{14}.
\end{equation}
Тоді
\begin{equation}\label{IneqLem1}
\sum\limits_{k=1}^{\infty}\sum\limits_{v=(2k+1)n-k}^{\infty} e^{-\alpha v^{r}}<
\frac{636}{169} \frac{n^{1-r}}{\alpha r}e^{-\alpha (3n-1)^{r}}.
\end{equation}
\end{lemma}
Доведення Леми~\ref{Lemma1} нами  наведено в підрозділі~\ref{LemmaProof} даної роботи.

Співставивши нерівності \eqref{n_star} i \eqref{NumberIneq} легко переконатись, що  якщо $n\geq n_{*}(\alpha,r,p)$ при довільних фіксованих $\alpha>0$, $r\in(0,1)$ i $1\leq p\leq \infty$, то при вказаних $n$, $\alpha$ i $r$ умова \eqref{NumberIneq} Леми~\ref{Lemma1} також виконується, а разом з нею --- і нерівність \eqref{IneqLem1}.

Тому,  з урахуванням \eqref{rn}, при $n\geq n_{*}(\alpha,r,p)$, $1\leq p\leq \infty$, одержуємо
\begin{equation}\label{rnEstimate}
|r_{n}(t)| \leq
\sum\limits_{k=1}^{\infty}\sum\limits_{v=(2k+1)n-k}^{\infty} e^{-\alpha v^{r}} < \frac{636}{169} \frac{n^{1-r}}{\alpha r}e^{-\alpha (3n-1)^{r}}.
\end{equation}

Покажемо, що при довільних $n\geq n_{*}(\alpha,r,p)$, $r\in(0,1)$, $\alpha>0$, $1\leq p\leq \infty$, 
\begin{equation}\label{AdditEstimate1}
\frac{n^{1-r}}{\alpha r}<\frac{1}{\pi} e^{\alpha((3n-1)^{r}-n^{r})}.
\end{equation}
Дійсно, в силу \eqref{n_star}
\begin{equation}\label{AdditEstimate2}
\frac{ \ln (\pi n)}{\alpha r n^{r}}\leq \frac{1}{14}, \ \frac{n^{1-r}}{\alpha r}\geq 14,
\end{equation}
а, отже,
\begin{equation*}
\frac{\ln \frac{\pi n}{\alpha r n^{r}}}{\alpha r n^{r}}<
\frac{\ln \frac{\pi n}{\alpha r n^{r}}+\ln \alpha r n^{r}}{\alpha r n^{r}}
=
\frac{\ln  \pi n}{\alpha r n^{r}}\leq \frac{1}{14},
\end{equation*}
звідки
\begin{equation*}
\ln \frac{\pi n}{\alpha r n^{r}} \leq \frac{\alpha r n^{r}}{14},
\end{equation*}
або, що те, саме 
\begin{equation}\label{AdditEstimate4}
 \frac{\pi n}{\alpha r n^{r}} \leq e^{\frac{\alpha r n^{r}}{14}}.
\end{equation}

Оскільки
\begin{equation*}
\frac{r}{2^{1-r}}< 2^{r}-1<r, \ \ r\in(0,1),
\end{equation*}
то
\begin{equation}\label{AdditEstimate5}
e^{\frac{\alpha r n^{r}}{14}}
<
e^{\frac{\alpha r n^{r}}{2^{1-r}}}<e^{\alpha n^{r}(2^{r-1})}\leq 
e^{\alpha n^{r}((3-\frac{1}{n})^{r}-1)} =  e^{\alpha((3n-1)^{r}-n^{r})}.
\end{equation}

Об'єднавши \eqref{AdditEstimate4} i \eqref{AdditEstimate5}, отримуємо \eqref{AdditEstimate1}.

Із \eqref{rnEstimate} і \eqref{AdditEstimate1} випливає наступна оцінка для $|r_{n}(t)|$:
\begin{equation}\label{rNAdditEstimate}
|r_{n}(t)|< \frac{636}{169\pi}e^{-\alpha n^{r}}, \ \ n\geq n_{*}(\alpha,r,p), \ \ \alpha>0, \ r\in(0,1), \ 1\leq p\leq\infty.
\end{equation}

Беручи в \eqref{IntegrRepr} в якості $t_{n-1}$ поліном $t_{n-1}^{*}$ найкращого наближення у просторі $L_{p}$ функції $f^{\alpha,r}_{\beta}(\cdot)$, тобто такий, що
\begin{equation}\label{bestApprox}
\| f^{\alpha,r}_{\beta} - t_{n-1}^{*} \|_{p} = E_{n}(f^{\alpha,r}_{\beta} )_{L_{p}}
=\inf\limits_{t_{n-1} \in \mathcal{T}_{2n-1}}\| f^{\alpha,r}_{\beta} - t_{n-1} \|_{p} , \ \ 1\leq p\leq\infty,
\end{equation}
і застосовуючи нерівність Гельдера
\begin{equation}\label{HolderIneq}
\int\limits_{-\pi}^{\pi}|h(t)g(t)|dt \leq \|h\|_{p}\|g\|_{p'}, \ \ h\in L_{p}, \ \ 1\leq p\leq \infty,
g\in L_{p'}, \ \ \frac{1}{p}+\frac{1}{p'}=1
\end{equation}

та оцінку \eqref{rNAdditEstimate},  для довільної функції $f\in C^{\alpha,r}_{\beta}L_{p}$ при $ n\geq n_{*}(\alpha,r,p)$ маємо

\begin{equation}\label{AdditionEstimRho}
|\tilde{\rho}_{n}(f;x)| \leq 2 \left| \sin \frac{2n-1}{2}x\right| 
\left(\frac{1}{\pi} \left\| \sum\limits_{k=n}^{\infty}e^{-\alpha k^{r}}\cos (kt+\gamma_{n})   \right\|_{p'} +\theta_{n,p}e^{-\alpha n^{r}}
\right)E_{n}(f^{\alpha,r}_{\beta})_{L_{p}},
\end{equation}
де $\gamma_{n}$ означена формулою \eqref{gamma_n}, а для величини $\theta_{n,p}=\theta_{n,p}(\alpha,r,\beta,x)$ виконується оцінка 
$|\theta_{n,p}|< \frac{1272}{169 \pi}$, $1\leq p\leq\infty$.

Із \cite{SerdyukStepanyukDopov}--\cite{SerdyukStepanyuk2017} випливає, що при довільних $r\in(0,1)$, $\alpha>0$, $\xi\in\mathbb{R}$, $1\leq p \leq \infty$ і $n\geq n_{0}(\alpha, r ,p)$, де $n_{0}(\alpha, r ,p)$ --- найменший з номерів $n$, такий, що

\begin{equation}\label{n_0}
 \frac{1}{\alpha rn^{r}}+\frac{\alpha r \chi(p)}{n^{1-r}}\leq{\left\{\begin{array}{cc}
 \frac{1}{14},  & p=1, \\
\frac{1}{(3\pi)^3}\cdot\frac{p-1}{p}, & 1< p<\infty, \\
\frac{1}{(3\pi)^3},  & p=\infty,
  \end{array} \right.}
\end{equation}
мають місце оцінки
\begin{align}\label{EstimNorm1}
&\frac{1}{\pi}\left \| \sum\limits_{k=n}^{\infty}e^{-\alpha k^{r}}\cos(kt+\xi) \right \|_{p'}
\notag \\
=&e^{-\alpha n^{r}}n^{\frac{1-r}{p}} \left( \frac{\| \cos t\|_{p'} }{\pi^{1+\frac{1}{p'}}(\alpha r)^{\frac{1}{p}}}I_{p'}\left(\frac{\pi n^{1-r}}{\alpha r} \right)+
\gamma_{n,p}^{(1)}\left( \frac{1}{(\alpha r)^{1+\frac{1}{p}}}I_{p'}\left(\frac{\pi n^{1-r}}{\alpha r} \right) \frac{1}{n^{r}}+\frac{1}{n^{\frac{1-r}{p}}}\right) \right),
\end{align}
\begin{align}\label{EstimNorm2}
&\frac{1}{\pi}\inf\limits_{\lambda\in \mathbb{R}}\left \| \sum\limits_{k=n}^{\infty}e^{-\alpha k^{r}}\cos(kt+\xi) -\lambda \right \|_{p'}
\notag \\
=&e^{-\alpha n^{r}}n^{\frac{1-r}{p}} \left( \frac{\| \cos t\|_{p'} }{\pi^{1+\frac{1}{p'}}(\alpha r)^{\frac{1}{p}}}I_{p'}\left(\frac{\pi n^{1-r}}{\alpha r} \right)+
\gamma_{n,p}^{(2)}\left( \frac{1}{(\alpha r)^{1+\frac{1}{p}}}I_{p'}\left(\frac{\pi n^{1-r}}{\alpha r} \right) \frac{1}{n^{r}}+\frac{1}{n^{\frac{1-r}{p}}}\right) \right),
\end{align}
в яких $\frac{1}{p}+\frac{1}{p'}=1$, $I_{s}(v):=\left\| \frac{1}{\sqrt{1+t^{2}}} \right\|_{L_{s}[0,v]}$,
\begin{equation}\label{Is_norm}
I_{s}(v):=\left\| \frac{1}{\sqrt{1+t^{2}}} \right\|_{L_{s}[0,v]}
=
   {\left\{\begin{array}{cc}
\bigg(\int\limits_{0}^{v}  \left| \frac{1}{\sqrt{1+t^{2}}} \right| ^{s}dt
\bigg)^{\frac{1}{s}}, & 1\leq s<\infty, \\
\mathop{\rm{ess}\sup}\limits_{t\in[0,v]} \left| \frac{1}{\sqrt{1+t^{2}}} \right|, \ & s=\infty. \
  \end{array} \right.}
\end{equation}
а для величин $\gamma_{n,p}^{(i)}=\gamma_{n,p}^{(i)}(\alpha, r,\xi)$, $i=1,2$ виконуються нерівності $|\gamma_{n,p}^{(i)}|\leq(14\pi)^{2}$.

Враховуючи, що згідно з \eqref{n_star} i \eqref{n_0} $n_{0}(\alpha,r,p) \leq n_{*}(\alpha,r,p)$, то, застосувавши формулу \eqref{EstimNorm1} при $\xi=\gamma_{n}$, де $\gamma_{n}$ означена формулою \eqref{gamma_n}, із \eqref{HolderIneq} i \eqref{AdditionEstimRho} при $n\geq n_{*}(\alpha,r,p)$ отримуємо
\begin{align}\label{RhoEstimate2}
|\tilde{\rho}_{n} (f;x)|\leq& 2 e^{-\alpha n^{r}}n^{\frac{1-r}{p}} \left| \sin \frac{2n-1}{2}x \right|
\left(
\frac{\|\cos t \|_{p'}}{\pi^{1+\frac{1}{p'}}(\alpha r)^{\frac{1}{p}}} I_{p'}\left(\frac{\pi n^{1-r}}{\alpha r} \right) \right.
\notag \\
+&
\left.
\frac{\gamma_{n,p}^{(1)}}{(\alpha r)^{1+\frac{1}{p}}}I_{p'}\left(\frac{\pi n^{1-r}}{\alpha r} \right) \frac{1}{n^{r}}+
\left(\gamma_{n,p}^{(1)}+\theta_{n,p}  \right)\frac{1}{n^{\frac{1-r}{p}}}
\right) E_{n}(f^{\alpha,r}_{\beta})_{L_{p}}, \ \ 1\leq p \leq \infty.
\end{align}


Як встановлено в \cite{SerdyukStepanyuk2017}, при $1<p<\infty$ і $n\geq n_{0}(\alpha,r,p)$ 
\begin{equation}\label{estimateIp}
I_{p'}\left(\frac{\pi n^{1-r}}{\alpha r} \right)
=
F^{\frac{1}{p'}}\left(\frac{1}{2}, \frac{3-p'}{2}; \frac{3}{2}; 1 \right)+
\frac{\Theta_{\alpha,r,p,n}^{(1)}}{p'-1}\left(\frac{\alpha r}{\pi n^{1-r}} \right)^{p'-1},
\end{equation}
де $|\Theta_{\alpha,r,p,n}^{(1)}|<2$, і крім того, 
\begin{equation}\label{IneqIp}
I_{p'}\left(\frac{\pi n^{1-r}}{\alpha r} \right) <
p^{\frac{1}{p'}}.
\end{equation}
Із \eqref{estimateIp}, \eqref{IneqIp}, а також  з очевидної нерівності
\begin{equation*}
\frac{1}{n^{\frac{1-r}{p}}}>
\frac{1}{n^{(1-r)(p'-1)} }
\end{equation*}
випливає, що при $n\geq n_{0}(\alpha,r,p)$, $1<p<\infty$, $\xi\in \mathbb{R}$, співвідношення \eqref{EstimNorm1} i \eqref{EstimNorm2} приводять до наступних оцінок:
\begin{align}\label{EstimNorm3}
&\frac{1}{\pi}\left \| \sum\limits_{k=n}^{\infty}e^{-\alpha k^{r}}\cos(kt+\xi) \right \|_{p'}
= e^{-\alpha n^{r}}n^{\frac{1-r}{p}}  \left( \frac{\| \cos t\|_{p'} }{\pi^{1+\frac{1}{p'}}(\alpha r)^{\frac{1}{p}}} F^{\frac{1}{p'}}\left(\frac{1}{2}, \frac{3-p'}{2}; \frac{3}{2}; 1 \right)
\right.
\notag \\
+&
\left.
\bar{\gamma}_{n,p}^{(1)}
\left(\left(1+ \frac{(\alpha r)^{\frac{p'-1}{p}}}{p'-1} \right) 
 \frac{1}{n^{\frac{1-r}{p}}}
+
\frac{p^{\frac{1}{p'}}}{(\alpha r)^{1+\frac{1}{p}}n^{r}} 
  \right)\right),
\end{align}
\begin{align}\label{EstimNorm4}
&\frac{1}{\pi}
\inf\limits_{\lambda\in \mathbb{R}}
\left \| \sum\limits_{k=n}^{\infty}e^{-\alpha k^{r}}\cos(kt+\xi) - \lambda \right \|_{p'}
= e^{-\alpha n^{r}}n^{\frac{1-r}{p}}  \left( \frac{\| \cos t\|_{p'} }{\pi^{1+\frac{1}{p'}}(\alpha r)^{\frac{1}{p}}} F^{\frac{1}{p'}}\left(\frac{1}{2}, \frac{3-p'}{2}; \frac{3}{2}; 1 \right)
\right.
\notag \\
+&
\left.
\bar{\gamma}_{n,p}^{(2)}
\left(\left(1+ \frac{(\alpha r)^{\frac{p'-1}{p}}}{p'-1} \right) 
 \frac{1}{n^{\frac{1-r}{p}}}
+
\frac{p^{\frac{1}{p'}}}{(\alpha r)^{1+\frac{1}{p}}n^{r}} 
  \right)\right),
\end{align}
де $\frac{1}{p}+\frac{1}{p'}=1$, а для величин $\bar{\gamma}_{n,p}^{(i)} = \bar{\gamma}_{n,p}^{(i)}(\alpha,r , \xi) $ виконуються нерівності 
$|\bar{\gamma}_{n,p}^{(i)}| \leq (14\pi)^{2}$.

Застосовуючи формулу \eqref{EstimNorm3} при $\xi=\gamma_{n}$, де $\gamma_{n}$ означена рівністю  \eqref{gamma_n},  і враховуючи, що $n_{0}(\alpha,r,p)\geq n_{*}(\alpha,r,p)$, із \eqref{rNAdditEstimate} при $n\geq n_{*}(\alpha,r,p)$, $1<p<\infty$, маємо

\begin{align}\label{RhoEstimate3}
|\tilde{\rho}_{n} (f;x)|\leq& 2 e^{-\alpha n^{r}}n^{\frac{1-r}{p}} \left| \sin \frac{2n-1}{2}x \right|
\left(
\frac{\|\cos t \|_{p'}}{\pi^{1+\frac{1}{p'}}(\alpha r)^{\frac{1}{p}}} F^{\frac{1}{p'}} \left(\frac{1}{2}, \frac{3-p'}{2}; \frac{3}{2}; 1 \right)\right.
\notag \\
+&
\left.
\left( \bar{\gamma}_{n,p}^{(1)}
\left(1+ \frac{(\alpha r)^{\frac{p'-1}{p}}}{p'-1}\right) +\theta_{n,p}\right) 
 \frac{1}{n^{\frac{1-r}{p}}}
+
\bar{\gamma}_{n,p}^{(1)}
\frac{p^{\frac{1}{p'}}}{(\alpha r)^{1+\frac{1}{p}}n^{r}} 
\right) E_{n}(f^{\alpha,r}_{\beta})_{L_{p}}.
\end{align}

Оскільки $|\bar{\gamma}_{n,p}^{(1)}+\theta_{n,p} | < 20\pi^{4}$, то з \eqref{RhoEstimate3} випливає оцінка \eqref{Theorem_Case1<p<infty}.

Далі доведемо  справедливість другої частини Теореми~\ref{theorem_1<p<infty}.

Користуючись інтегральним зображенням \eqref{IntegrRepr} та беручи до уваги ортогональність функції $r_{n}(t)$ вигляду \eqref{rn} до будь-якого тригонометричного полінома $t_{n}\in\mathcal{T}_{2n-1}$, для довільної функції $f\in C^{\alpha,r}_{\beta}L_{p}$, $1\leq p\leq\infty$, в кожній точці $x\in \mathbb{R}$ можемо записати

\begin{align}\label{IntegrRepr1}
&\tilde{\rho}_{n}(f;x)=
f(x)-\tilde{S}_{n}(f;x) \notag \\
=&2\sin\frac{2n-1}{2}x 
\left( \frac{1}{\pi}\int\limits_{-\pi}^{\pi}f^{\alpha,r}_{\beta}(t+x)
\sum\limits_{k=n}^{\infty}e^{-\alpha k^{r}}\cos (kt+\gamma_{n})
 dt
 +
 \frac{1}{\pi}\int\limits_{-\pi}^{\pi}\delta_{n}(t+x)
r_{n}(t)
 dt \right),
\end{align}
де  $\delta_{n}(\cdot)= f^{\alpha,r}_{\beta}(\cdot)-t_{n-1}(\cdot)$, $t_{n-1}$ --- довільний  поліном з $\mathcal{T}_{2n-1}$, а $r_{n}(t)$ i $\gamma_{n}=\gamma_{n}(\beta;x)$ означені за допомогою рівностей \eqref{rn} та \eqref{gamma_n} відповідно. Для функції 
\begin{equation}\label{g_x}
g_{x}(\cdot):=\frac{1}{\pi}\int\limits_{-\pi}^{\pi}f^{\alpha,r}_{\beta}(t+x)
\sum\limits_{k=1}^{\infty}e^{-\alpha k^{r}}\cos (kt+\gamma_{n})
 dt,
\end{equation}
яка очевидно належить до множини $C^{\alpha,r}_{2\gamma_{n}/\pi}L_{p}$, при фіксованому $x\in\mathbb{R}$  відхилення її частинних сум Фур'є $S_{n-1}(g_{x}, \cdot)$ порядку $n-1$ підпорядковані рівності
\begin{equation}\label{rho_n}
\rho(g_{x}; \cdot)=
g_{x}(\cdot)-S_{n-1}(g_{x}, \cdot)
=
\frac{1}{\pi}\int\limits_{-\pi}^{\pi}f^{\alpha,r}_{\beta}(t+\cdot)
\sum\limits_{k=n}^{\infty}e^{-\alpha k^{r}}\cos (kt+\gamma_{n})
 dt,
\end{equation}
і, зокрема
\begin{equation}\label{rho_nx}
\rho(g_{x}; x)=
g_{x}(x)-S_{n-1}(g_{x}, x)
=
\frac{1}{\pi}\int\limits_{-\pi}^{\pi}f^{\alpha,r}_{\beta}(t+x)
\sum\limits_{k=n}^{\infty}e^{-\alpha k^{r}}\cos (kt+\gamma_{n})
 dt.
\end{equation}

У відповідності з Теоремою 1 роботи \cite{SerdyukStepanyukFilomat} для функції $g_{x}(\cdot)$ при кожному $n\in\mathbb{N}$ знайдеться функція $G(\cdot)= G(f;n;x;\cdot)$ така, що
\begin{equation}\label{G_equality}
E_{n}(G^{\alpha,r}_{2\gamma_{n}/\pi})_{L_{p}}=
E_{n}(f^{\alpha,r}_{\beta})_{L_{p}}, \ \ 1<p<\infty,
\end{equation}
і для якої при $n\geq n_{0}(\alpha,r,p)$
\begin{align}\label{normC_G}
&\| {\rho}_{n}(G;\cdot)\|_{C}=
\| f(x)-{S}_{n}(f;x) \|_{C} \notag \\
=&e^{-\alpha n^{r}}n^{\frac{1-r}{p}}
\left(
\frac{\|\cos t \|_{p'}}{\pi^{1+\frac{1}{p'}}(\alpha r)^{\frac{1}{p}}}F^{\frac{1}{p'}}\left(\frac{1}{2},\frac{3-p'}{2};\frac{3}{2};1 \right) \right.
\notag \\
+&\left.
\gamma_{n,p}\left( 
\left(1+\frac{(\alpha r)^{\frac{p'-1}{p}}}{p'-1}\right)\frac{1}{n^{\frac{1-r}{p}}}
+\frac{p^{\frac{1}{p'}}}{(\alpha r)^{1+\frac{1}{p}}n^{r}}
\right)
\right) E_{n}(f^{\alpha,r}_{\beta})_{L_{p}}, \ \frac{1}{p}+\frac{1}{p'}=1,
\end{align}
де $\gamma_{n,p}=\gamma_{n,p}(\alpha,r,\beta,x)$ підпорядкована нерівності $|\gamma_{n,p}|\leq (14\pi)^{2}$.

Виберемо точку $x_{0}$ таким чином, щоб справджувалася рівність
\begin{equation}\label{G_equality1}
| {\rho}_{n}(G;x_{0})|=\| {\rho}_{n}(G;\cdot)\|_{C}.
\end{equation}

Покладемо
\begin{equation}\label{F_equality1}
\mathcal{F}(t):= \mathcal{J}^{\alpha,r}_{\beta} G^{\alpha,r}_{2\gamma_{n}/\pi}(t-x+x_{0}).
\end{equation}
За означенням $\mathcal{F}(t)\in C^{\alpha,r}_{\beta}L_{p}$.
Покажемо, що вона є шуканою функцією. Дійсно, оскільки згідно з \eqref{F_equality1} $\mathcal{F}^{\alpha,r}_{\beta}(t)= G^{\alpha,r}_{2\gamma_{n}/\pi}(t-x+x_{0})$, то з урахуванням \eqref{normC_G} та інваріантності $L_{p}$--норми відносно зсуву аргументу маємо
\begin{equation}\label{F_equality2}
E_{n}( \mathcal{F}^{\alpha,r}_{\beta})_{L_{p}}= E_{n}( G^{\alpha,r}_{2\gamma_{n}/\pi})_{L_{p}}=
E_{n}( f^{\alpha,r}_{\beta})_{L_{p}}, \ 1<p<\infty.
\end{equation}

Крім того, в силу \eqref{IntegrRepr1}, \eqref{G_equality}, \eqref{normC_G}, \eqref{G_equality1}, 
\eqref{rnEstimate} i  \eqref{AdditionEstimRho} для довільного заданого значення аргументу $x\in\mathbb{R}$ при $n\geq n_{*}(\alpha,r,p)$
\begin{align}\label{EstimateRho_F}
&| \tilde{\rho}_{n}(\mathcal{F};x)|\notag \\
=
&2\left| \sin\frac{2n-1}{2}x \right|
\left( \frac{1}{\pi} \left| \int\limits_{-\pi}^{\pi}G^{\alpha,r}_{2\gamma_{n}/\pi}(x_{0}+t)
\sum\limits_{k=n}^{\infty}e^{-\alpha k^{r}}\cos (kt+\gamma_{n})
 dt\right|
 +
\theta_{n,p}e^{-\alpha n^{r}}E_{n}( f^{\alpha,r}_{\beta})_{L_{p}} \right)
 \notag \\
 =
&2\left| \sin\frac{2n-1}{2}x \right|
\left( | {\rho}_{n}(G;x_{0})|
 +
\theta_{n,p}e^{-\alpha n^{r}}E_{n}( f^{\alpha,r}_{\beta})_{L_{p}} \right)
 \notag \\
 =
&2\left| \sin\frac{2n-1}{2}x \right|
\left( \| {\rho}_{n}(G; \cdot)\|_{C}
 +
\theta_{n,p}e^{-\alpha n^{r}}E_{n}( f^{\alpha,r}_{\beta})_{L_{p}} \right)
 \notag \\
=&2\left| \sin\frac{2n-1}{2}x \right| e^{-\alpha n^{r}}n^{\frac{1-r}{p}}
\left(
\frac{\|\cos t \|_{p'}}{\pi^{1+\frac{1}{p'}}(\alpha r)^{\frac{1}{p}}}F^{\frac{1}{p'}}\left(\frac{1}{2},\frac{3-p'}{2};\frac{3}{2};1 \right) \right.
\notag \\
+&\left.
\gamma_{n,p}\left( 
\left(1+\frac{(\alpha r)^{\frac{p'-1}{p}}}{p'-1}\right)\frac{1}{n^{\frac{1-r}{p}}}
+\frac{p^{\frac{1}{p'}}}{(\alpha r)^{1+\frac{1}{p}}n^{r}}
\right) + \frac{\theta_{n,p}}{n^{\frac{1-r}{p}}}
\right) E_{n}(f^{\alpha,r}_{\beta})_{L_{p}}, 
\end{align}
де для величин $\gamma_{n,p}$ i $\theta_{n,p}$ виконуються оцінки 
$|\gamma_{n,p}|\leq (14\pi)^{2}$, $| \theta_{n,p}|< \frac{1272}{169\pi}$ i $|\theta_{n,p}+ \gamma_{n,p}|<20\pi^{4}$. Із рівностей \eqref{EstimateRho_F} випливає \eqref{Theorem_Case1<p<inftyEquality}.
Теорему~\ref{theorem_1<p<infty} доведено.
\end{proof}

\begin{theorem}\label{theorem_p=1}
Нехай $0<r<1$, $\alpha>0$, $\beta\in\mathbb{R}$ i $n\in\mathbb{N}$. Тоді, для всякого $x\in\mathbb{R}$ i довільної функції
 $f\in C^{\alpha,r}_{\beta}L_{1}$ при $n\geq n_{*}(\alpha,r,1)$ виконується нерівність
 \begin{equation}\label{Theorem_Ineq1_p=1}
|\tilde{\rho}_{n}(f;x)|
\leq
2 e^{-\alpha n^{r}}n^{1-r}
\left|\sin \frac{2n-1}{2}x \right| 
\Big(
\frac{1}{\pi\alpha r}+\gamma_{n,1}^{*}\Big(\frac{1}{(\alpha r)^{2}}\frac{1}{n^{r}}+\frac{1}{n^{1-r}}\Big)\Big)E_{n}(f^{\alpha,r}_{\beta})_{L_{1}}.
 \end{equation}
 
 Крім того, для довільної функції  $f\in C^{\alpha,r}_{\beta}L_{1}$ можна вказати функцію ${\mathcal{F}(\cdot)=\mathcal{F}(f;n; x, \cdot)}$ з множини $C^{\alpha,r}_{\beta}L_{1}$ таку, що
  $E_{n}(\mathcal{F}^{\alpha,r}_{\beta})_{L_{1}}=E_{n}(f^{\alpha,r}_{\beta})_{L_{1}}$ 
і при  $n\geq n_{*}(\alpha,r,1)$  має місце рівність
 \begin{equation}\label{Theorem_Eq_p=1}
|\tilde{\rho}_{n}(\mathcal{F}; x)|
=
2 e^{-\alpha n^{r}}n^{1-r}
\left|\sin \frac{2n-1}{2}x \right| 
\Big(
\frac{1}{\pi\alpha r}+\gamma_{n,1}^{*}\Big(\frac{1}{(\alpha r)^{2}}\frac{1}{n^{r}}+\frac{1}{n^{1-r}}\Big)\Big)E_{n}(f^{\alpha,r}_{\beta})_{L_{1}}.
 \end{equation}  
В (\ref{Theorem_Ineq1_p=1})  і  (\ref{Theorem_Eq_p=1}) величини ${\gamma_{n,1}^{*}=\gamma_{n,1}^{*}(\alpha,r,\beta,f,x)}$ такі, що  ${|\gamma_{n,1}^{*}|<20\pi^{4}}$.
\end{theorem}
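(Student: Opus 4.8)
The plan is to mirror the two parts of the proof of Theorem~\ref{theorem_1<p<infty}, specialising everything to the endpoint $p=1$ (so $p'=\infty$). For the upper bound \eqref{Theorem_Ineq1_p=1} I would begin, exactly as there, from the integral representation \eqref{IntegrRepr}, combine it with the remainder bound \eqref{rNAdditEstimate} (which already holds for all $1\le p\le\infty$), replace the polynomial $t_{n-1}$ by the polynomial $t_{n-1}^{*}$ of best $L_{1}$-approximation, and apply H\"older's inequality \eqref{HolderIneq} with the conjugate pair $(1,\infty)$. This produces the general estimate \eqref{RhoEstimate2} read at $p=1$, in which the deviation is governed by $\frac{1}{\pi}\bigl\|\sum_{k=n}^{\infty}e^{-\alpha k^{r}}\cos(kt+\gamma_{n})\bigr\|_{\infty}$ together with the bounded remainder $\theta_{n,1}e^{-\alpha n^{r}}$.

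The decisive simplification at $p=1$ is that the quantity $I_{p'}$ of \eqref{Is_norm} degenerates: $I_{\infty}\bigl(\tfrac{\pi n^{1-r}}{\alpha r}\bigr)=\sup_{0\le t\le \pi n^{1-r}/(\alpha r)}(1+t^{2})^{-1/2}=1$, the supremum being attained at $t=0$. Hence, in contrast with the range $1<p<\infty$, no hypergeometric refinement of the type \eqref{estimateIp} is required; one simply sets $p=1$, $p'=\infty$, $\|\cos t\|_{\infty}=1$ and $I_{\infty}=1$ in \eqref{RhoEstimate2} (legitimate because $n_{0}(\alpha,r,1)\le n_{*}(\alpha,r,1)$, from comparison of \eqref{n_0} with \eqref{n_star}, so that the norm identity \eqref{EstimNorm1} underlying \eqref{RhoEstimate2} is available). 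This exhibits the leading coefficient $\tfrac{1}{\pi\alpha r}$ together with the two correction terms $\tfrac{1}{(\alpha r)^{2}n^{r}}$ and $\tfrac{1}{n^{1-r}}$. Absorbing the $\theta_{n,1}$ contribution into the $n^{1-r}$-term and bounding the resulting bounded factors by a single $\gamma_{n,1}^{*}$ gives \eqref{Theorem_Ineq1_p=1}; the claim $|\gamma_{n,1}^{*}|<20\pi^{4}$ follows from $|\gamma_{n,1}^{(1)}|\le(14\pi)^{2}$ and $|\theta_{n,1}|<\tfrac{1272}{169\pi}$, since $(14\pi)^{2}+\tfrac{1272}{169\pi}<20\pi^{4}$.

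For the sharpness assertion \eqref{Theorem_Eq_p=1} I would repeat the extremal-function construction of the second half of the proof of Theorem~\ref{theorem_1<p<infty}. Starting from \eqref{IntegrRepr1}, isolate the auxiliary function $g_{x}$ of \eqref{g_x}, belonging to $C^{\alpha,r}_{2\gamma_{n}/\pi}L_{1}$, whose ordinary Fourier-sum deviation is \eqref{rho_nx}. Using the $p=1$ counterpart of Proposition~1 of \cite{SerdyukStepanyukFilomat} one selects a function $G=G(f;n;x;\cdot)$ with $E_{n}(G^{\alpha,r}_{2\gamma_{n}/\pi})_{L_{1}}=E_{n}(f^{\alpha,r}_{\beta})_{L_{1}}$ whose deviation attains its $C$-norm and for which $\|\rho_{n}(G;\cdot)\|_{C}$ equals the $p=1$ specialisation of the asymptotic \eqref{normC_G} (leading constant $\tfrac{1}{\pi\alpha r}$). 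Choosing $x_{0}$ with $|\rho_{n}(G;x_{0})|=\|\rho_{n}(G;\cdot)\|_{C}$ and putting $\mathcal{F}(t)=\mathcal{J}^{\alpha,r}_{\beta}G^{\alpha,r}_{2\gamma_{n}/\pi}(t-x+x_{0})$, one verifies $\mathcal{F}\in C^{\alpha,r}_{\beta}L_{1}$ and $E_{n}(\mathcal{F}^{\alpha,r}_{\beta})_{L_{1}}=E_{n}(f^{\alpha,r}_{\beta})_{L_{1}}$, and then evaluates $|\tilde{\rho}_{n}(\mathcal{F};x)|$ verbatim as in \eqref{EstimateRho_F} to obtain the exact equality.

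The main obstacle is the sharpness half, not the bound. The upper estimate is essentially a substitution ($p'=\infty$, $I_{\infty}=1$) into machinery already in place for $1<p<\infty$, hence routine. The genuine work is the $p=1$ extremal construction: because $L_{1}$ is non-reflexive and its dual $L_{\infty}$ fails to be strictly convex, the existence and exact form of the extremiser $G$ need more care than in the reflexive range, and one must secure the endpoint $C$-norm asymptotic for Fourier sums on $C^{\alpha,r}_{\beta,1}$ (the $p=1$ analogue of \eqref{normC_G}) with the correct constant $\tfrac{1}{\pi\alpha r}$. Once that is in hand, both \eqref{Theorem_Ineq1_p=1} and \eqref{Theorem_Eq_p=1} follow.
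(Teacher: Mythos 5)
Your proposal follows essentially the same route as the paper: the upper bound is obtained by specialising the general estimate \eqref{RhoEstimate2} to $p=1$, $p'=\infty$, using $\|\cos t\|_{\infty}=1$ and $I_{\infty}\bigl(\tfrac{\pi n^{1-r}}{\alpha r}\bigr)=1$, and the equality is obtained via the representation \eqref{IntegrRepr1}, the auxiliary function $g_{x}$, the extremal function $G$ supplied by the cited result of \cite{SerdyukStepanyukFilomat} (the paper invokes its Proposition~2 for $p=1$), and the shift $\mathcal{F}(t)=\mathcal{J}^{\alpha,r}_{\beta}G^{\alpha,r}_{2\gamma_{n}/\pi}(t-x+x_{0})$. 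The non-reflexivity concern you raise is exactly the point the paper delegates to that external proposition, so no gap remains.
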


\begin{proof}[Доведення Теореми~\ref{theorem_p=1}]

Для доведення нерівності \eqref{Theorem_Ineq1_p=1} використаємо формулу \eqref{RhoEstimate2} при $p=1$, згідно з якою при довільних $x\in \mathbb{R}$, $f\in C^{\alpha,r}_{\beta}L_{1}$ i  $n\geq n_{*}(\alpha,r,1)$

 \begin{align}\label{rho_Theorem_p=1}
&|\tilde{\rho}_{n}(f; x)|
\leq
2 e^{-\alpha n^{r}}n^{1-r}
\left|\sin \frac{2n-1}{2}x \right| 
\Big(
\frac{\|\cos t\|_{\infty}}{\pi \alpha r} \left\| \frac{1}{\sqrt{t^{2}+1}} \right\|_{L_{\infty}[0, \frac{\pi n^{1-r}}{\alpha r}]}
\notag \\
+&
\frac{\gamma_{n,1}^{(1)}}{(\alpha r)^{2}}\frac{1}{n^{r}}  \left\| \frac{1}{\sqrt{t^{2}+1}} \right\|_{L_{\infty}[0, \frac{\pi n^{1-r}}{\alpha r}]}
+(\gamma_{n,1}^{(1)}+ \theta_{n,1}  ) \frac{1}{n^{1-r}}\Big)\Big)E_{n}(f^{\alpha,r}_{\beta})_{L_{1}} \notag\\
=&
2 e^{-\alpha n^{r}}n^{1-r}
\left|\sin \frac{2n-1}{2}x \right| 
\Big(\frac{1}{\pi\alpha r}+ \frac{\gamma_{n,1}^{(1)}}{(\alpha r)^{2}}\frac{1}{n^{r}}
+(\gamma_{n,1}^{(1)}+ \theta_{n,1}  ) \frac{1}{n^{1-r}}   \Big)E_{n}(f^{\alpha,r}_{\beta})_{L_{1}},
 \end{align}  
де $|\gamma_{n,1}|\leq (14\pi)^{2}$, $| \theta_{n,1}|< \frac{1272}{169\pi}$. Оскільки $|\gamma_{n,1}^{(1)}+ \theta_{n,1} | < 20\pi^{4}$, то із \eqref{rho_Theorem_p=1} випливає оцінка \eqref{Theorem_Ineq1_p=1}.

Доведемо другу частину Теореми~\ref{theorem_p=1}. Для довільної
 $f \in C^{\alpha,r}_{\beta}L_{1}$ і довільного фіксованого $x\in\mathbb{R}$ має місце рівність \eqref{IntegrRepr1}, в якій $f^{\alpha,r}_{\beta} \in L_{1}$.
Розглянемо функцію $g_{x}(\cdot)$ вигляду \eqref{g_x}, що очевидно належить до множини $C^{\alpha,r}_{2\gamma_{n}/\pi}L_{1}$. Для відхилень частинних сум Фур'є $S_{n-1}(g_{x};\cdot)$ порядку $n-1$ від функції $g_{x}(\cdot)$ виконується рівність \eqref{rho_n}, (а отже і  \eqref{rho_nx}). Видповідно до Теореми 2 роботи \cite{SerdyukStepanyukFilomat} для функції $g_{x}(\cdot)$ при кожному $n\in\mathbb{N}$ знайдеться функція $G(\cdot)= G(f,n;x;\cdot)$ така, що

\begin{equation}\label{G_derivative}
E_{n}(G^{\alpha,r}_{2\gamma_{n}/\pi})_{L_{1}}=E_{n}(f^{\alpha,r}_{\beta})_{L_{1}}
\end{equation}
і для якої при $n\geq n_{0}(\alpha,r,1)$
\begin{align}\label{rho_G_Theorem_p=1}
\| \rho_{n}(G,\cdot)\|_{C}=&
\| G(\cdot) - S_{n-1}(G; \cdot)\|_{C} \notag \\
&=e^{-\alpha n^{r}}n^{1-r} 
\Big(
\frac{1}{\pi\alpha r}+\gamma_{n,1}\Big(\frac{1}{(\alpha r)^{2}}\frac{1}{n^{r}}+\frac{1}{n^{1-r}}\Big)\Big)E_{n}(f^{\alpha,r}_{\beta})_{L_{1}},
\end{align}
де $\gamma_{n,1}=\gamma_{n,1}(\alpha,r,\beta, x)$ підпорядкована умові   $|\gamma_{n,1}|\leq (14\pi)^{2}$.

Виберемо точку $x_{0}$ таким чином, щоб виконувалась рівність \eqref{G_equality1}. Розглянемо функцію $\mathcal{F}(t)$, означену рівністю \eqref{F_equality1}, яка очевидно належить множині  $C^{\alpha,r}_{\beta}L_{1}$ і покажемо, що ця функція є шуканою функцією. Для функції  $\mathcal{F}(t)$, з урахуванням формули \eqref{G_derivative}, \eqref{F_equality1} та інваріантності $L_{1}$-норми відносно зсуву аргументу маємо
 \begin{equation}\label{G_derivative1}
E_{n}(\mathcal{F}^{\alpha,r}_{\beta})_{L_{1}}=E_{n}(G^{\alpha,r}_{2\gamma_{n}/\pi})_{L_{1}}=E_{n}(f^{\alpha,r}_{\beta})_{L_{1}}.
\end{equation}

Крім того, в силу \eqref{IntegrRepr1}, \eqref{G_derivative}, \eqref{rho_G_Theorem_p=1}, \eqref{G_equality1}, \eqref{rNAdditEstimate}, \eqref{HolderIneq},   для довільного заданого значення аргументу $x\in\mathbb{R}$ при $n\geq n_{*}(\alpha,r,1)$ 

\begin{align}\label{rho_F_Theorem_p=1}
& |\tilde{\rho}_{n}(\mathcal{F}; x)|
 \notag \\
 = &2\left| \sin\frac{2n-1}{2}x \right|
\left( \frac{1}{\pi}  \left|\int\limits_{-\pi}^{\pi}  G^{\alpha,r}_{2\gamma_{n}/\pi}(x_{0}+t)
\sum\limits_{k=n}^{\infty}e^{-\alpha k^{r}} \cos (kt+\gamma_{n}) 
 dt \right|
 +
\theta_{n,1}e^{-\alpha n^{r}} E_{n}( f^{\alpha,r}_{\beta})_{L_{1}} \right)
 \notag \\
  = &2\left| \sin\frac{2n-1}{2}x \right|
\left( |\rho_{n}(G,x_{0})|
 +
\theta_{n,1}e^{-\alpha n^{r}} E_{n}( f^{\alpha,r}_{\beta})_{L_{1}} \right)
 \notag \\
 = &2\left| \sin\frac{2n-1}{2}x \right|
\left(\|\rho_{n}(G,\cdot)\|_{C}
 +
\theta_{n,1}e^{-\alpha n^{r}} E_{n}( f^{\alpha,r}_{\beta})_{L_{1}} \right)
 \notag \\
=&2\left| \sin\frac{2n-1}{2}x \right|
e^{-\alpha n^{r}}n^{1-r} 
\Big(
\frac{1}{\pi\alpha r}+\gamma_{n,1}\Big(\frac{1}{(\alpha r)^{2}}\frac{1}{n^{r}}+\frac{1}{n^{1-r}}\Big)+  \frac{\theta_{n,1}}{n^{1-r}}\Big)E_{n}(\varphi)_{L_{1}},
\end{align}
де для величин $\gamma_{n,1}$ та $\theta_{n,1}$ виконуються оцінки $|\gamma_{n,1}|\leq (14\pi)^{2}$,  $|\theta_{n,1}|<\frac{1272}{169\pi}$ i $|\gamma_{n,1}+\theta_{n,1}|< 20\pi^{4}$. Із рівності \eqref{rho_F_Theorem_p=1} випливає \eqref{Theorem_Eq_p=1}. Теорему~\ref{theorem_p=1} доведено.
\end{proof}

\begin{theorem}\label{theorem_p=infty}
Нехай $r\in (0,1)$, $\alpha>0$  i   $\beta\in\mathbb{R}$.
Тоді для всіх $x\in\mathbb{R}$ i довільної функції $f\in C^{\alpha,r}_{\beta}L_{\infty}$ 
при $p=\infty$ i $n\geq n_{*}(\alpha,r,\infty)$  має місце
нерівність
\begin{equation}\label{Theorem_Ineq_p=infty}
|\tilde{\rho}_{n}(f;x)|
\leq
2e^{-\alpha n^{r}} \left|\sin \frac{2n-1}{2}x \right| 
\left(
\frac{4}{\pi^{2}}\ln \frac{n^{1-r}}{\alpha r}
+ \gamma^{*}_{n,\infty}
\right) E_{n}(f^{\alpha,r}_{\beta})_{L_{\infty}},
\end{equation}
де для величини  $ \gamma^{*}_{n,\infty}(\alpha,r,\beta,x) $ виконується оцінка $| \gamma^{*}_{n,\infty}|<20\pi^{4}$

Крім того, для довільної функції $f\in C^{\alpha,r}_{\beta}C $ можна вказати функцію  $\mathcal{F}(x)=\mathcal{F}(f;n;x; \cdot)$ з множини $C^{\alpha,r}_{\beta}C $ таку, що $E_{n}(\mathcal{F}^{\alpha,r}_{\beta})_{C}=E_{n}(f^{\alpha,r}_{\beta})_{C}$, і таку що для $n\geq n_{*}(\alpha,r,\infty)$ виконується рівність
\begin{equation}\label{Theorem_Eq_p=infty}
|\tilde{\rho}_{n}(\mathcal{F}; x)|=2 e^{-\alpha n^{r}}\left|\sin \frac{2n-1}{2}x \right| \left(\frac{4}{\pi^{2}}\ln \frac{n^{1-r}}{\alpha r} 
+\gamma_{n,\infty}^{**} \right) E_{n}(f^{\alpha,r}_{\beta})_{C}.
\end{equation}
У формулі  \eqref{Theorem_Eq_p=infty}
для величини $\gamma^{**}_{n,\infty}=\gamma^{**}_{n,\infty}(\alpha,r,\beta,f,x)$ виконується оцінка $|\gamma^{**}_{n,\infty}|<1951$.
\end{theorem}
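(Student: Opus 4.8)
The plan is to run the same two-part argument that was used for Theorems~\ref{theorem_1<p<infty} and~\ref{theorem_p=1}, but now the main term has to be extracted from a quantity that grows logarithmically, which is what distinguishes the endpoint $p=\infty$. For the upper bound \eqref{Theorem_Ineq_p=infty} I would start from the universal estimate \eqref{RhoEstimate2}, which is already stated for every $1\le p\le\infty$, and specialize it to $p=\infty$ (so $p'=1$). Then $n^{\frac{1-r}{p}}=1$, $\|\cos t\|_{p'}=\|\cos t\|_{1}=4$, $\pi^{1+\frac{1}{p'}}=\pi^{2}$, $(\alpha r)^{\frac1p}=1$ and $(\alpha r)^{1+\frac1p}=\alpha r$, so \eqref{RhoEstimate2} collapses to
\begin{equation*}
|\tilde{\rho}_{n}(f;x)|\le 2e^{-\alpha n^{r}}\Big|\sin\tfrac{2n-1}{2}x\Big|\Big(\tfrac{4}{\pi^{2}}I_{1}\big(\tfrac{\pi n^{1-r}}{\alpha r}\big)+\tfrac{\gamma^{(1)}_{n,\infty}}{\alpha r n^{r}}I_{1}\big(\tfrac{\pi n^{1-r}}{\alpha r}\big)+\gamma^{(1)}_{n,\infty}+\theta_{n,\infty}\Big)E_{n}(f^{\alpha,r}_{\beta})_{L_{\infty}},
\end{equation*}
with $|\gamma^{(1)}_{n,\infty}|\le(14\pi)^{2}$ and $|\theta_{n,\infty}|<\frac{1272}{169\pi}$. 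Here the hypergeometric shortcut \eqref{estimateIp} is unavailable, since $F(\tfrac12,1;\tfrac32;1)$ diverges at $p'=1$, so the whole asymptotic content now sits inside $I_{1}$.

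The second step is the explicit analysis of $I_{1}(v)=\int_{0}^{v}\frac{dt}{\sqrt{1+t^{2}}}=\ln\!\big(v+\sqrt{1+v^{2}}\big)$. From this closed form one gets $I_{1}(v)=\ln v+\ln 2+\mathcal{O}(v^{-2})$, so with $v=\frac{\pi n^{1-r}}{\alpha r}$,
\begin{equation*}
\tfrac{4}{\pi^{2}}I_{1}\Big(\tfrac{\pi n^{1-r}}{\alpha r}\Big)=\tfrac{4}{\pi^{2}}\ln\tfrac{n^{1-r}}{\alpha r}+\tfrac{4}{\pi^{2}}\ln(2\pi)+\mathcal{O}\!\Big(\tfrac{(\alpha r)^{2}}{n^{2(1-r)}}\Big),
\end{equation*}
which already produces the advertised main term $\frac{4}{\pi^{2}}\ln\frac{n^{1-r}}{\alpha r}$. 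The only genuinely new bookkeeping is the error term $\frac{1}{\alpha r n^{r}}I_{1}(\frac{\pi n^{1-r}}{\alpha r})$, which is itself of logarithmic type. I would control it via $I_{1}(v)\le\ln(3v)$ for $v\ge1$ together with the threshold inequalities already derived from \eqref{n_star}, namely $\alpha r n^{r}\ge14$, $\frac{n^{1-r}}{\alpha r}\ge14$ and $\frac{\ln\pi n}{\alpha r n^{r}}\le\frac{1}{(3\pi)^{3}}$ (see \eqref{AdditEstimate2} and the lines through \eqref{AdditEstimate5}). These give $\frac{1}{\alpha r n^{r}}I_{1}(\frac{\pi n^{1-r}}{\alpha r})=\mathcal{O}(1)$ uniformly; collecting $\frac{4}{\pi^{2}}\ln(2\pi)$, the $\mathcal{O}(v^{-2})$ remainder, this logarithmic error, and $\gamma^{(1)}_{n,\infty}+\theta_{n,\infty}$ yields $\gamma^{*}_{n,\infty}$ with $|\gamma^{*}_{n,\infty}|<20\pi^{4}$, the dominant contribution being $|\gamma^{(1)}_{n,\infty}|\le(14\pi)^{2}$, which leaves ample room.

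For the sharpness statement \eqref{Theorem_Eq_p=infty} I would copy the scheme of Theorems~\ref{theorem_1<p<infty} and~\ref{theorem_p=1}: start from the exact representation \eqref{IntegrRepr1}, introduce $g_{x}$ as in \eqref{g_x} (now an element of $C^{\alpha,r}_{2\gamma_{n}/\pi}C$), and invoke the endpoint ($C$-metric) analogue of the extremal-function results of \cite{SerdyukStepanyukFilomat} to produce a $G=G(f;n;x;\cdot)$ with $E_{n}(G^{\alpha,r}_{2\gamma_{n}/\pi})_{C}=E_{n}(f^{\alpha,r}_{\beta})_{C}$ and $\|\rho_{n}(G;\cdot)\|_{C}=e^{-\alpha n^{r}}\big(\frac{4}{\pi^{2}}\ln\frac{n^{1-r}}{\alpha r}+\gamma_{n,\infty}\big)E_{n}(f^{\alpha,r}_{\beta})_{C}$, $|\gamma_{n,\infty}|\le(14\pi)^{2}$; this is the $p=\infty$ counterpart of \eqref{normC_G}. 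Choosing $x_{0}$ with $|\rho_{n}(G;x_{0})|=\|\rho_{n}(G;\cdot)\|_{C}$ and setting $\mathcal{F}(t)=\mathcal{J}^{\alpha,r}_{\beta}G^{\alpha,r}_{2\gamma_{n}/\pi}(t-x+x_{0})$ as in \eqref{F_equality1}, one checks $\mathcal{F}\in C^{\alpha,r}_{\beta}C$ and $E_{n}(\mathcal{F}^{\alpha,r}_{\beta})_{C}=E_{n}(f^{\alpha,r}_{\beta})_{C}$. Feeding this $\mathcal{F}$ into \eqref{IntegrRepr1} makes the first integral equal $|\rho_{n}(G;x_{0})|=\|\rho_{n}(G;\cdot)\|_{C}$, while the $r_{n}$-integral is bounded through \eqref{rNAdditEstimate}, giving the stated equality with $\gamma^{**}_{n,\infty}=\gamma_{n,\infty}+\theta_{n,\infty}$ and $|\gamma^{**}_{n,\infty}|<1951$.

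I expect the main obstacle to be precisely the endpoint behaviour of $I_{1}$: unlike the case $1<p<\infty$, where $I_{p'}$ stays bounded (by $p^{1/p'}$, see \eqref{IneqIp}) and tends to a constant hypergeometric value, here $I_{1}$ grows like $\ln v$ and carries the leading asymptotics. The delicate point is to separate the genuine main term $\frac{4}{\pi^{2}}\ln\frac{n^{1-r}}{\alpha r}$ from the logarithmic error term $\frac{1}{\alpha r n^{r}}I_{1}(\frac{\pi n^{1-r}}{\alpha r})$ and to show the latter is uniformly $\mathcal{O}(1)$, which is exactly where the fine threshold estimates tied to $n_{*}(\alpha,r,\infty)$ are indispensable. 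A secondary technical hurdle is securing the sharp $C$-norm asymptotics for $\rho_{n}(G;\cdot)$ with a logarithmic, rather than constant, main term.
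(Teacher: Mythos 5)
Your proposal is correct and follows essentially the same route as the paper: the upper bound comes from specializing \eqref{RhoEstimate2} to $p=\infty$ and replacing the hypergeometric evaluation by the logarithmic asymptotics of $I_{1}$ (the paper cites this as formula (112) of \cite{SerdyukStepanyuk2018}, namely $I_{1}\left(\frac{\pi n^{1-r}}{\alpha r}\right)=\ln\frac{\pi n^{1-r}}{\alpha r}+\Theta_{\alpha,r,n}$ with $0<\Theta_{\alpha,r,n}<1$, which is exactly your $\ln v+\ln 2+\mathcal{O}(v^{-2})$), while the equality part uses the extremal function $G$ supplied by Theorem~1 of \cite{SerdyukStepanyukJAEN} together with the translation \eqref{F_equality1}. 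The only detail you pass over is the explicit verification that the threshold $n_{1}(\alpha,r)$ required by that cited result satisfies $n_{1}(\alpha,r)\le n_{*}(\alpha,r,\infty)$, which the paper checks in \eqref{Ineq_comparison_n}--\eqref{additionalIneq1}.
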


 Зрозуміло, що якщо в умовах Теореми~\ref{theorem_p=infty} $f\in C^{\alpha,r}_{\beta}C$, то в нерівності \eqref{Theorem_Ineq_p=infty}  величину $E_{n}(f^{\alpha,r}_{\beta})_{L_{\infty}}$ можна замінити на $E_{n}(f^{\alpha,r}_{\beta})_{C}$.

\begin{proof}[Доведення Теореми~\ref{theorem_p=infty}]

Для доведення нерівності \eqref{Theorem_Ineq_p=infty} скористаємось оцінкою \eqref{RhoEstimate2} при $p=\infty$, а також наступною оцінкою (див. формулу (112) із \cite{SerdyukStepanyuk2018}):
\begin{equation}\label{I_estimate}
I_{1}\left(\frac{\pi n^{1-r}}{\alpha r} \right)
=
\int\limits_{0}^{\frac{\pi n^{1-r}}{\alpha r}}
\frac{dt}{\sqrt{1+t^{2}}}=
\ln \frac{\pi n^{1-r}}{\alpha r}+\Theta_{\alpha,r,n}, \ \ 0<\Theta_{\alpha,r,n}<1,
\end{equation}
де $I_{1}(v)$ означена формулою \eqref{Is_norm} при $s=p'=1$.

Отже, як випливає з  \eqref{RhoEstimate2} і \eqref{I_estimate}, при $n\geq n_{*}(\alpha,r,\infty)$ для довільної функції $f\in C^{\alpha,r}_{\beta}L_{\infty}$

\begin{align}\label{RhoEstimate4}
|\tilde{\rho}_{n} (f;x)|\leq& 2 e^{-\alpha n^{r}}  \left| \sin \frac{2n-1}{2}x \right|
\left(
\frac{4}{\pi^{2}}\ln \frac{\pi n^{1-r}}{\alpha r}
\right.
\notag \\
+&
\left.
\frac{4}{\pi^{2}}\Theta_{\alpha, r,n}
+ {\gamma}_{n,\infty}^{(1)}
\left( \ln \frac{\pi n^{1-r}}{\alpha r}+ \Theta_{\alpha,r,n}  \right) 
 \frac{1}{\alpha r n^{r}}
+
{\gamma}_{n,\infty}^{(1)}
+ \theta_{n,\infty}
\right) E_{n}(f^{\alpha,r}_{\beta})_{L_{\infty}}.
\end{align}

Оскільки при $n\geq n_{*}(\alpha,r,\infty)$ 

\begin{align}\label{FinalEstim}
&\frac{4}{\pi^{2}}\left( \ln \pi+ \Theta_{\alpha, r,n}\right) +
|{\gamma}_{n,\infty}^{(1)}|\left( \ln \frac{\pi n^{1-r}}{\alpha r}+
 \Theta_{\alpha,r,n}  \right)  \frac{1}{\alpha r n^{r}}+|{\gamma}_{n,\infty}^{(1)}
+ \theta_{n,\infty}|
\notag \\
<&
\frac{4(1+\ln\pi)}{\pi^{2}}+\frac{2(14\pi)^{2}}{(3\pi)^{3}}+(14\pi)^{2}+
\frac{1272}{169\pi}<20\pi^{4},
\end{align}
то з \eqref{RhoEstimate4} i \eqref{FinalEstim} випливає оцінка \eqref{Theorem_Ineq_p=infty}.

Доведемо  другу частину Теореми~\ref{theorem_p=infty}. Для довільної функції $f\in C^{\alpha,r}_{\beta}C$ і будь-якого фіксованого значення $x\in\mathbb{R}$ виконується рівність \eqref{IntegrRepr1}, в якій $f^{\alpha,r}_{\beta}\in C$. Розглянемо функцію $g_{x}(\cdot)$ вигляду \eqref{g_x} з множини $C^{\alpha, r}_{2\gamma_{n}/\pi}C$. Для відхилень $\rho_{n}(g_{x},\cdot)$ частинних сум Фур'є $S_{n-1}(g_{x};\cdot)$ порядку $n-1$ від функції $g_{x}(\cdot)$ виконується рівність \eqref{rho_n} (а отже і \eqref{rho_nx}). Відповідно до Теореми 1 роботи \cite{SerdyukStepanyukJAEN} для функції $g_{x}$ при будь-якому $n\in\mathbb{N}$ знайдеться функція $G(\cdot)= G(f;n;x;\cdot)$ з множини $C^{\alpha, r}_{2\gamma_{n}/\pi}C$ така, що
\begin{equation}\label{eqE_G}
E_{n}(G^{\alpha, r}_{2\gamma_{n}/\pi})_{C}= E_{n}(f^{\alpha,r}_{\beta})_{C}
\end{equation}
і для якої при всіх $n\geq n_{1}(\alpha,r)$, де $n_{1}(\alpha,r)$ --- найменше натуральне число, котре задовольняє нерівність
\begin{equation}\label{n1}
\frac{1}{\alpha r n^{r}} \left(  1+ \ln \frac{\pi n^{1-r}}{\alpha r} \right)+\frac{\alpha r}{n^{1-r}}  < \frac{1}{(3\pi)^{3}}, \ \ \alpha>0, \ r\in(0,1),
\end{equation}
виконуєтсья рівність
\begin{equation}\label{eq_rho_G}
\| \rho_{n}(G,\cdot) \|_{C}= \| G(\cdot) - S_{n-1}(G,\cdot)  \|_{C}
=e^{-\alpha n^{r}}\left( \frac{4}{\pi^{2}}  \ln \frac{ n^{1-r}}{\alpha r}  + \bar{\gamma}_{n,\infty}  \right)  E_{n}(f^{\alpha,r}_{\beta})_{C},
\end{equation}
 де $\bar{\gamma}_{n,\infty}=\bar{\gamma}_{n,\infty}(\alpha,r, \beta, x)$ підпорядкована умові $|\bar{\gamma}_{n,\infty} |\leq 20\pi^{4}$.
 
 Покажемо, що 
 \begin{equation}\label{Ineq_comparison_n}
  n_{1}(\alpha,r)\leq n_{*}(\alpha, r,\infty),
\end{equation} 
 тобто, що при будь-яких $\alpha>0$ i $r\in (0,\infty)$ із умови 
 \begin{equation}\label{additionalIneq}
  \frac{\ln \pi n}{\alpha r n^{r}} +\frac{\alpha r}{n^{1-r}}  \leq \frac{1}{(3\pi)^{3}}
\end{equation}
випливає нерівність \eqref{n1}. Дійсно, із \eqref{additionalIneq} безпосередньо отримуємо, що 
 \begin{equation*}
 \alpha r n^{r} \geq (3\pi)^{3} \ln \pi n \geq  (3\pi)^{3} \ln \pi,
\end{equation*}
а тому
 \begin{equation}\label{additionalIneq1}
 1- \ln \alpha r n^{r}  \leq 1- \ln(3\pi)^{3} \ln\pi <0.
\end{equation}

Отже, за виконання \eqref{additionalIneq}, з урахуванням \eqref{additionalIneq1} можемо записати
\begin{align*}
&\frac{1}{\alpha r n^{r}} \left(  1+ \ln \frac{\pi n^{1-r}}{\alpha r} \right)+\frac{\alpha r}{n^{1-r}} 
=\frac{1}{\alpha r n^{r}} \left(  1- \ln  \alpha r n^{r} + \ln \pi n \right)+\frac{\alpha r}{n^{1-r}} 
\notag \\
<&  \frac{\ln \pi n}{\alpha r n^{r}} +\frac{\alpha r}{n^{1-r}}  \leq \frac{1}{(3\pi)^{3}}
\end{align*} 
 звідки випливає \eqref{n1}. Тим самим нерівність \eqref{Ineq_comparison_n} доведено.

 Виберемо точку $x_{0}$ таким чином, щоб виконувалась рівність \eqref{G_equality1}. Розглянемо, як і раніше, функцію $\mathcal{F}$, що задається формулою \eqref{F_equality1}. Ця функція з множини $C^{\alpha,r}_{\beta}C$ і буде шуканою функцією. Для неї, з урахуванням \eqref{eqE_G} та інваріантності рівномірної норми відносно зсуву аргумента, маємо 
 \begin{equation}\label{eqE_GA}
E_{n}(\mathcal{F}^{\alpha,r}_{\beta})_{C}=E_{n}(G^{\alpha, r}_{2\gamma_{n}/\pi})_{C}= E_{n}(f^{\alpha,r}_{\beta})_{C}.
\end{equation}

 Крім того, в силу \eqref{IntegrRepr1},  \eqref{eqE_GA}, \eqref{eq_rho_G}, \eqref{G_equality1}, \eqref{rNAdditEstimate}, \eqref{bestApprox}, \eqref{HolderIneq} i \eqref{Ineq_comparison_n} для довільного значення аргумента $x\in\mathbb{R}$ при $n\geq  n_{*}(\alpha,r,\infty) $

\begin{align}\label{rho_F_Theorem_p=infty}
& |\tilde{\rho}_{n}(\mathcal{F}; x)|
 \notag \\
 = &2\left| \sin\frac{2n-1}{2}x \right|
\left( \frac{1}{\pi} \left| \int\limits_{-\pi}^{\pi}  G^{\alpha,r}_{2\gamma_{n}/\pi}(x_{0}+t)
\sum\limits_{k=n}^{\infty}e^{-\alpha k^{r}} \cos (kt+\gamma_{n}) 
 dt \right|
 +
\theta_{n,\infty}e^{-\alpha n^{r}} E_{n}( f^{\alpha,r}_{\beta})_{C} \right)
 \notag \\
 = & 2\left| \sin\frac{2n-1}{2}x \right|
\left(  \left| \rho_{n}(G;x_{0})\right|
 +
\theta_{n,\infty}e^{-\alpha n^{r}} E_{n}( f^{\alpha,r}_{\beta})_{C} \right)
 \notag \\
  = &
  2\left| \sin\frac{2n-1}{2}x \right|
\left(  \left \| \rho_{n}(G;\cdot)\right\|_{C}
 +
\theta_{n,\infty}e^{-\alpha n^{r}} E_{n}( f^{\alpha,r}_{\beta})_{C} \right)
 \notag \\
  = &
  2\left| \sin\frac{2n-1}{2}x \right| e^{-\alpha n^{r}}
\left( \frac{4}{\pi^{2}}\ln \frac{n^{1-r}}{\alpha r}+ \bar{\gamma}_{n,\infty}
 +
\theta_{n,\infty}  \right) E_{n}( f^{\alpha,r}_{\beta})_{C},
\end{align}
де для величин $ \bar{\gamma}_{n,\infty}$ i $\theta_{n,\infty}$ виконуються оцінки $|\bar{\gamma}_{n,\infty}| \leq 20\pi^{4}$, $|\theta_{n,\infty}|< \frac{1272}{169\pi}$ i $ |\bar{\gamma}_{n,\infty} + \theta_{n,\infty}| <1951$.
 Теорему~\ref{theorem_p=infty}  доведено.
\end{proof}

\section{Розв'язок задачі Колмогорова-Нікольського для інтерполяційних поліномів Лагранжа на класах узагальнених інтегралів Пуассона $C^{\alpha,r}_{\beta,p}$ }

Із Теорем \ref{theorem_1<p<infty}, \ref{theorem_p=1} та \ref{theorem_p=infty} даної роботи випливає, що нерівності \eqref{Theorem_Case1<p<infty}, \eqref{Theorem_Ineq1_p=1} та \eqref{Theorem_Ineq_p=infty} є асимптотично непокращуваними на множинах узагальнених інтегралів Пуассона $C^{\alpha,r}_{\beta}L_{p}$ при всіх $x\in\mathbb{R}$, $\beta\in\mathbb{R}$, $\alpha>0$, $r\in(0,1)$ i $1\leq p\leq \infty$. Зрозуміло, що зазначені нерівності мають місце і для довільних підмножин із $C^{\alpha,r}_{\beta}L_{p}$, якими є множини  $C^{\alpha,r}_{\beta}\mathfrak{N}$, $\mathfrak{N} \subset L_{p}$ i, зокрема класи 
 \begin{equation*}
C^{\alpha,r}_{\beta,p}=C^{\alpha,r}_{\beta}U_{p}, \ \ U_{p}=\left\{ \varphi \in L_{p}: \|\varphi\|_{p}\leq 1 \right\}.
\end{equation*}

Розглядаючи точні верхні межі в обох частинах кожної з нерівностей \eqref{Theorem_Case1<p<infty}, \eqref{Theorem_Ineq1_p=1} та \eqref{Theorem_Ineq_p=infty} по класу  $C^{\alpha,r}_{\beta,p}$ при $p\in(1,\infty)$, $p=1$ та $p=\infty$ відповідно, і врахувавши, що для довільної   $f\in C^{\alpha,r}_{\beta,p}$, $1\leq p\leq \infty$, $E_{n}( f^{\alpha,r}_{\beta})_{L_{p}}\leq 1$, отримуємо, що при $n\geq n_{*}(\alpha,r,p)$ і всіх $x\in \mathbb{R}$ вірні наступні нерівності:
\begin{align}\label{Addit_InterpInequality_1<p<infty}
\tilde{\mathcal{E}}_{n}(C^{\alpha,r}_{\beta,p};x)\leq & 2e^{-\alpha n^{r}}n^{\frac{1-r}{p}}\left|\sin \frac{2n-1}{2}x \right| 
\left(
\frac{\|\cos t \|_{p'}}{\pi^{1+\frac{1}{p'}}(\alpha r)^{\frac{1}{p}}}F^{\frac{1}{p'}}\left(\frac{1}{2},\frac{3-p'}{2};\frac{3}{2};1 \right) \right.
\notag \\
+&\left.
\bar{\gamma}^{*}_{n,p}\left( 
\left(1+\frac{(\alpha r)^{\frac{p'-1}{p}}}{p'-1}\right)\frac{1}{n^{\frac{1-r}{p}}}
+\frac{p^{\frac{1}{p'}}}{(\alpha r)^{1+\frac{1}{p}}n^{r}}
\right)
\right),  \ 1<p<\infty, \ \frac{1}{p}+\frac{1}{p'}=1,
\end{align}
\begin{align}\label{Addit_InterpInequality_p=1}
\tilde{\mathcal{E}}_{n}(C^{\alpha,r}_{\beta,1};x)
\leq 2e^{-\alpha n^{r}}n^{1-r}\left|\sin \frac{2n-1}{2}x \right| 
\left(
\frac{1}{\pi\alpha r}+ \bar{\gamma}^{*}_{n,1}\left(\frac{1}{(\alpha r)^{2}n^{r}}+
\frac{1}{n^{1-r}}
\right)
\right),  
\end{align}
\begin{align}\label{Addit_InterpInequality_p=infty}
\tilde{\mathcal{E}}_{n}(C^{\alpha,r}_{\beta, \infty};x)
\leq
2e^{-\alpha n^{r}} \left|\sin \frac{2n-1}{2}x \right| 
\left(
\frac{4}{\pi^{2}}\ln \frac{n^{1-r}}{\alpha r}
+ \bar{\gamma}^{*}_{n,\infty}
\right),
\end{align}
 де $|\bar{\gamma}^{*}_{n,p}|<20\pi^{4}$ при $1\leq p\leq \infty$.

Виявляється, що в \eqref{Addit_InterpInequality_1<p<infty}--\eqref{Addit_InterpInequality_p=infty} знак "$\leq$"  можна замінити на знак "$=$". Цей факт випливатиме із наступного твердження.

\begin{theorem}\label{theorem_sup_Interpolation}
Нехай $r\in(0,1)$, $\alpha>0$, $\beta\in\mathbb{R}$,  $1\leq p\leq \infty$ i $x\in\mathbb{R}$.
Тоді при $n\geq n_{*}(\alpha,r,1)$
\begin{align}\label{Theorem_sup_p=1}
\tilde{\mathcal{E}}_{n}(C^{\alpha,r}_{\beta,1};x)
=e^{-\alpha n^{r}}n^{1-r}\left|\sin \frac{2n-1}{2}x \right| 
\left(
\frac{2}{\pi\alpha r}+ \delta^{*}_{n,1}\left(
\frac{1}{n^{1-r}}+
\frac{1}{(\alpha r)^{2}n^{r}}
\right)
\right),  
\end{align}
при   $n\geq n_{*}(\alpha,r,p)$ i $1<p<\infty$ 

\begin{align}\label{Theorem_sup_1<p<infty}
&\tilde{\mathcal{E}}_{n}(C^{\alpha,r}_{\beta,p};x)=e^{-\alpha n^{r}}n^{\frac{1-r}{p}}\left|\sin \frac{2n-1}{2}x \right| 
\notag \\
\times&
\left(
\frac{2\|\cos t \|_{p'}}{\pi^{1+\frac{1}{p'}}(\alpha r)^{\frac{1}{p}}}F^{\frac{1}{p'}}\left(\frac{1}{2},\frac{3-p'}{2};\frac{3}{2};1 \right)+
\delta^{*}_{n,p}\left( 
\left(1+\frac{(\alpha r)^{\frac{p'-1}{p}}}{p'-1}\right)\frac{1}{n^{\frac{1-r}{p}}}
+\frac{p^{\frac{1}{p'}}}{(\alpha r)^{1+\frac{1}{p}}n^{r}}
\right)
\right), 
\end{align}
а при  $n\geq n_{*}(\alpha,r,\infty)$ 

\begin{align}\label{Theorem_sup_p=infty}
\tilde{\mathcal{E}}_{n}(C^{\alpha,r}_{\beta, \infty};x)
=
e^{-\alpha n^{r}} \left|\sin \frac{2n-1}{2}x \right| 
\left(
\frac{8}{\pi^{2}}\ln \frac{n^{1-r}}{\alpha r}
+ \delta^{*}_{n,\infty}
\right).
\end{align}
У формулах \eqref{Theorem_sup_p=1}--\eqref{Theorem_sup_p=infty}
для величин $\delta^{*}_{n,p}=\delta^{*}_{n,p}(\alpha,r,\beta,x)$ виконується оцінка $|\delta^{*}_{n,p}|<40\pi^{4}$.

\end{theorem}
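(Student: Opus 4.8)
The plan is to read off the ``$\le$'' halves of \eqref{Theorem_sup_p=1}--\eqref{Theorem_sup_p=infty} directly from the already established bounds \eqref{Addit_InterpInequality_p=1}, \eqref{Addit_InterpInequality_1<p<infty} and \eqref{Addit_InterpInequality_p=infty}: the explicit factor $2$ in front of each of those bounds, multiplied by the leading coefficient inside the parentheses, reproduces precisely the leading coefficients $\tfrac{2}{\pi\alpha r}$, $\tfrac{2\|\cos t\|_{p'}}{\pi^{1+1/p'}(\alpha r)^{1/p}}F^{1/p'}(\tfrac12,\tfrac{3-p'}{2};\tfrac32;1)$ and $\tfrac{8}{\pi^2}\ln\tfrac{n^{1-r}}{\alpha r}$ appearing in the theorem. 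Thus the whole substance of the statement is the reverse inequality $\tilde{\mathcal E}_n(C^{\alpha,r}_{\beta,p};x)\ge\dots$, and the work is to produce, for every large $n$, a single function of the class at which the supremum is asymptotically attained.

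First I would fix $x$ and return to the representation \eqref{IntegrRepr1}. For $f\in C^{\alpha,r}_{\beta,p}$, writing $\varphi:=f^{\alpha,r}_\beta\in U_p$ with $\varphi\perp1$, it reads $\tilde\rho_n(f;x)=2\sin\tfrac{2n-1}{2}x\,\bigl(\Phi(\varphi)+R(\varphi)\bigr)$, where $\Phi(\varphi)=\rho(g_x;x)=\tfrac1\pi\int_{-\pi}^{\pi}\varphi(t+x)\Psi(t)\,dt$ is the Fourier-sum-type functional of \eqref{rho_nx}, $\Psi(t)=\sum_{k=n}^{\infty}e^{-\alpha k^r}\cos(kt+\gamma_n)$ and $\gamma_n$ is given by \eqref{gamma_n}, while $R(\varphi)=\tfrac1\pi\int_{-\pi}^{\pi}\delta_n(t+x)r_n(t)\,dt$ is the interpolation remainder. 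Combining \eqref{rNAdditEstimate} with \eqref{HolderIneq} and $E_n(\varphi)_{L_p}\le\|\varphi\|_p\le1$ bounds the remainder uniformly, $|R(\varphi)|<\tfrac{1272}{169\pi}e^{-\alpha n^r}$; this is of order $e^{-\alpha n^r}$, hence one full power $n^{(1-r)/p}$ (for $p=1$ the factor $n^{1-r}$, for $p=\infty$ the factor $\ln\tfrac{n^{1-r}}{\alpha r}$) below the target order, and will be absorbed into the error term.

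The decisive step is to maximise $|\Phi(\varphi)|$ over the ball. After the substitution $s=t+x$ one has $\Phi(\varphi)=\tfrac1\pi\int_{-\pi}^{\pi}\varphi(s)\Psi(s-x)\,ds$; since $\Psi\perp1$ the functional annihilates constants, so restricting to $\varphi\perp1$ and using Hahn--Banach duality gives $\sup\{|\Phi(\varphi)|:\varphi\in U_p,\ \varphi\perp1\}=\tfrac1\pi\inf_{\lambda\in\mathbb R}\|\Psi-\lambda\|_{p'}$. For $1<p<\infty$ this is attained at the explicit conjugate function $\varphi^\ast=c\,|\Psi_x-\lambda^\ast|^{p'-1}\mathrm{sgn}(\Psi_x-\lambda^\ast)$, where $\lambda^\ast$ is the best constant $L_{p'}$-approximant of $\Psi_x(\cdot)=\Psi(\cdot-x)$ (so that $\varphi^\ast\perp1$ automatically), and for $p=1,\infty$ one uses near-extremizers. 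Choosing the sign of $\varphi^\ast$ so that $\sin\tfrac{2n-1}{2}x$ and $\Phi(\varphi^\ast)$ agree and applying the reverse triangle inequality yields $|\tilde\rho_n(f^\ast;x)|\ge2\bigl|\sin\tfrac{2n-1}{2}x\bigr|\bigl(\tfrac1\pi\inf_\lambda\|\Psi-\lambda\|_{p'}-\tfrac{1272}{169\pi}e^{-\alpha n^r}\bigr)$. Feeding in the $\inf_\lambda$-asymptotics \eqref{EstimNorm2}, \eqref{EstimNorm4} at $\xi=\gamma_n$ (and, for $p=\infty$, \eqref{I_estimate}) converts the main term into exactly the leading expression of the theorem, and gathering the three error contributions — the $\le(14\pi)^2$ from \eqref{EstimNorm4}, the $<\tfrac{1272}{169\pi}$ remainder, and the $<20\pi^4$ carried by the upper bound — into one coefficient $\delta^\ast_{n,p}$ gives $|\delta^\ast_{n,p}|<40\pi^4$.

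The main obstacle I foresee is the reconciliation of two different norms: the upper bound inherited from Theorems \ref{theorem_1<p<infty}--\ref{theorem_p=infty} is controlled by the plain norm $\|\Psi\|_{p'}$ (via \eqref{HolderIneq} applied to $\delta_n$), whereas the sharp lower bound over the ball is controlled by the best-constant deviation $\inf_\lambda\|\Psi-\lambda\|_{p'}$ (because $\varphi\perp1$ kills the constant). One must verify that these coincide to the required order in all three regimes of $p$ — which is exactly the content of \eqref{EstimNorm1}--\eqref{EstimNorm4} — while tracking the two competing lower-order terms $n^{-(1-r)/p}$ and $(\alpha r)^{-(1+1/p)}n^{-r}$ and the degeneration of $F^{1/p'}(\tfrac12,\tfrac{3-p'}{2};\tfrac32;1)$ into $\tfrac4{\pi^2}\ln\tfrac{n^{1-r}}{\alpha r}$ at $p=\infty$ through \eqref{I_estimate}. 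An alternative that avoids the duality computation is to invoke the equality assertions \eqref{Theorem_Case1<p<inftyEquality}, \eqref{Theorem_Eq_p=1} and \eqref{Theorem_Eq_p=infty}: one chooses the initial $f$ so that the extremal $\mathcal F$ constructed there lies in $C^{\alpha,r}_{\beta,p}$ with $E_n(\mathcal F^{\alpha,r}_\beta)_{L_p}\to1$, transferring the exact equality to the supremum — at the price of checking that the translate $\mathcal F^{\alpha,r}_\beta=G^{\alpha,r}_{2\gamma_n/\pi}(\cdot-x+x_0)$ retains unit $L_p$-norm.
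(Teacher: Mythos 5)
Your proposal is correct and follows essentially the same route as the paper: reduce the supremum over $C^{\alpha,r}_{\beta,p}$ to the supremum of the linear functional over the mean-zero unit ball $U_p^0$, convert it by the duality relation into $\frac{1}{\pi}\inf_{\lambda}\big\|\sum_{k=n}^{\infty}e^{-\alpha k^{r}}\cos(kt+\gamma_n)-\lambda\big\|_{p'}$, insert the asymptotics \eqref{EstimNorm4} (and their $p=1$, $p=\infty$ analogues with \eqref{I_estimate}), and absorb the uniformly small remainder $r_n$ from \eqref{rNAdditEstimate}. The only cosmetic difference is that the paper writes the supremum as a single exact identity with a term $\xi_n\|r_n\|_C$, $|\xi_n|\le 2$, rather than separating upper and lower bounds, and it does not need your alternative detour through the extremal functions $\mathcal{F}$ of Theorems \ref{theorem_1<p<infty}--\ref{theorem_p=infty}.
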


\begin{proof}[Доведення Теореми~\ref{theorem_sup_Interpolation}]

Будемо відштовхуватись від інтегрального зображення \eqref{IntegrRepr}, у якому $f\in C^{\alpha,r}_{\beta}$, $1\leq p\leq \infty$. Розглянувши точні верхні межі модулів обох частин рівності  \eqref{IntegrRepr} при $t_{n-1} \equiv 0$ по класу $C^{\alpha,r}_{\beta,p}$ та врахувавши  інваріантність множини 
\begin{equation*}
U_{p}^{0}= \left\{\varphi\in L_{p}: \ \|\varphi\|_{p}\leq 1, \ \int\limits_{-\pi}^{\pi}\varphi(t)dt=0 \right\}, \ \ 1\leq p \leq\infty,
\end{equation*}
відносно зсуву аргументу при всіх $\alpha>0$, $\beta\in\mathbb{R}$, $x\in\mathbb{R}$, $1\leq p\leq\infty$ i $n\in\mathbb{N}$ будемо мати
\begin{align}\label{Equation_InProof_Th_Interp1}
&\tilde{\mathcal{E}}_{n}(C^{\alpha,r}_{\beta, p};x)
=
\sup\limits_{f\in C^{\alpha,r}_{\beta, p}}   |\tilde{\rho}_{n}(f;x)|
\notag \\
=&
2  \left|\sin \frac{2n-1}{2}x \right| 
\left(
\sup\limits_{\varphi \in U_{p}^{0}} \frac{1}{\pi}\int\limits_{-\pi}^{\pi} \varphi(t) \sum\limits_{k=n}^{\infty}e^{-\alpha k^{r}}\cos(kt+\gamma_{n}) dt+ \xi_{n} \|r_{n}(\cdot)\|_{C}
\right),
\end{align}
де $\gamma_{n}$ i $r_{n}(t)$ означені рівностями \eqref{gamma_n} i \eqref{rn} відповідно, а для величини $\xi_{n}=\xi_{n}(\alpha,r,\beta,p)$ виконується нерівність $|\xi_{n}|\leq 2$.

Із співвідношень двоїстості (див., наприклад, \cite[с. 27]{Korn}) маємо
\begin{align}\label{Equation_InProof_Th_Interp2}
\sup\limits_{\varphi \in U_{p}^{0}} \frac{1}{\pi}\int\limits_{-\pi}^{\pi} \varphi(t) \sum\limits_{k=n}^{\infty}e^{-\alpha k^{r}}\cos(kt+\gamma_{n}) dt=
\frac{1}{\pi}\inf\limits_{\lambda \in\mathbb{R}} \left\| \sum\limits_{k=n}^{\infty}e^{-\alpha k^{r}}\cos(kt+\gamma_{n}) -\lambda \right\|_{p'}, \ \frac{1}{p}+\frac{1}{p'}=1.
\end{align}

Розглянемо випадок $p=1$. Із \cite{SerdyukStepanyukDopov}--\cite{SerdyukStepanyuk2017} випливає, що при довільних $r\in(0,1)$, $\alpha>0$, $\xi\in\mathbb{R}$ i $n\geq n_{0}(\alpha,r,1)$, де  $n_{0}(\alpha,r,1)$ --- найменший з номерів $n$ такий, що задовольняє нерівність \eqref{n_0} при $p=1$,  мають місце оцінки
\begin{align}\label{Equation_InProof_Th_Interp3}
\left\| \sum\limits_{k=n}^{\infty}e^{-\alpha k^{r}}\cos(kt+\xi)  \right\|_{\infty}
= e^{-\alpha n^{r}} n^{1-r} \left(\frac{1}{\pi \alpha r} +\bar{\gamma}^{(1)}_{n,1}
\left(\frac{1}{(\alpha r)^{2}}\frac{1}{n^{r}}+\frac{1}{n^{1-r}} \right)
\right),
\end{align}
\begin{align}\label{Equation_InProof_Th_Interp4}
\frac{1}{\pi}\inf\limits_{\lambda \in\mathbb{R}} \left\| \sum\limits_{k=n}^{\infty}e^{-\alpha k^{r}}\cos(kt+\xi) -\lambda \right\|_{p'}
= e^{-\alpha n^{r}} n^{1-r} \left(\frac{1}{\pi \alpha r} +\bar{\gamma}^{(2)}_{n,1}
\left(\frac{1}{(\alpha r)^{2}}\frac{1}{n^{r}}+\frac{1}{n^{1-r}} \right)
\right),
\end{align}
в яких для величин $\bar{\gamma}^{(i)}_{n,1}= \bar{\gamma}^{(i)}_{n,1}(\alpha,r,\xi)$, $i=1,2,$ виконуються нерівності $|\bar{\gamma}^{(i)}_{n,1}| \leq (14\pi)^{2}$.

Застосувавши формулу \eqref{Equation_InProof_Th_Interp4} при $\xi=\gamma_{n}$, де $\gamma_{n}$ означена рівністю \eqref{gamma_n} і враховуючи, що при  $n_{0}(\alpha,r,1)\leq n_{*}(\alpha,r,1)$ із \eqref{Equation_InProof_Th_Interp1}, \eqref{Equation_InProof_Th_Interp2} i \eqref{Equation_InProof_Th_Interp4} та \eqref{rNAdditEstimate} для  $n\geq n_{*}(\alpha,r,1)$ маємо
\begin{align}\label{Equation_InProof_Th_Interp5}
\tilde{\mathcal{E}}_{n}(C^{\alpha,r}_{\beta,1};x)
=
& 2  \left|\sin \frac{2n-1}{2}x \right| 
\left(
e^{-\alpha n^{r}} n^{1-r} \left(\frac{1}{\pi \alpha r} +\bar{\gamma}^{(2)}_{n,1}
\left(\frac{1}{(\alpha r)^{2}}\frac{1}{n^{r}}+\frac{1}{n^{1-r}} \right)+
\theta_{n,1}e^{-\alpha n^{r}}
\right)
\right)
\notag \\
 =&
e^{-\alpha n^{r}} n^{1-r}  \left |\sin \frac{2n-1}{2}x \right| 
\left(
 \frac{2}{\pi \alpha r} +2\bar{\gamma}^{(2)}_{n,1}
\frac{1}{(\alpha r)^{2}}\frac{1}{n^{r}}
+2 \left(\bar{\gamma}^{(2)}_{n,1} +  \theta_{n,1}\right)  \frac{1}{n^{1-r}}
\right),
\end{align}
де $|\theta_{n,1}| < \frac{1272}{169\pi}$.

Оскільки $2 \left(|\bar{\gamma}^{(2)}_{n,1}| + | \theta_{n,1}| \right)<40\pi^{4}$, то із \eqref{Equation_InProof_Th_Interp5} випливає \eqref{Theorem_sup_p=1}.

Нехай $p\in (0,1)$. В цьому випадку має місце оцінка \eqref{EstimNorm4}, що виконується при всіх $n\geq n_{0}(\alpha,r,p)$, $1<p<\infty$. Застосувавши \eqref{EstimNorm4} при $\xi=\gamma_{n}$, де $\gamma_{n}$ означена формулою \eqref{gamma_n} і врахувавши, що $n_{0}(\alpha,r,p) \leq n_{*}(\alpha,r,p)$, $1<p<\infty$,  із \eqref{Equation_InProof_Th_Interp1}, \eqref{Equation_InProof_Th_Interp2} та \eqref{rNAdditEstimate} при $n\geq n_{*}(\alpha,r,p)$ маємо

\begin{align}\label{Equation_InProof_Th_Interp6}
&\tilde{\mathcal{E}}_{n}(C^{\alpha,r}_{\beta, p};x)
=
2 \left|\sin \frac{2n-1}{2}x \right| 
  \left( e^{-\alpha n^{r}}n^{\frac{1-r}{p}}\left(
\frac{\|\cos t \|_{p'}}{\pi^{1+\frac{1}{p'}}(\alpha r)^{\frac{1}{p}}}F^{\frac{1}{p'}}\left(\frac{1}{2},\frac{3-p'}{2};\frac{3}{2};1 \right) \right. \right.
\notag \\
+&\left. \left.
\bar{\gamma}^{(2)}_{n,p}\left( 
\left(1+\frac{(\alpha r)^{\frac{p'-1}{p}}}{p'-1}\right)\frac{1}{n^{\frac{1-r}{p}}}
+\frac{p^{\frac{1}{p'}}}{(\alpha r)^{1+\frac{1}{p}}n^{r}}
\right)  \right) +\theta_{n,p}e^{-\alpha n^{r}}
\right) \notag\\
=&
 e^{-\alpha n^{r}}n^{\frac{1-r}{p}}
  \left|\sin \frac{2n-1}{2}x \right| 
  \left(
\frac{2\|\cos t \|_{p'}}{\pi^{1+\frac{1}{p'}}(\alpha r)^{\frac{1}{p}}}F^{\frac{1}{p'}}\left(\frac{1}{2},\frac{3-p'}{2};\frac{3}{2};1 \right) \right.
\notag \\
+&\left.
2\bar{\gamma}^{(2)}_{n,p}
\frac{p^{\frac{1}{p'}}}{(\alpha r)^{1+\frac{1}{p}}}
\frac{1}{n^{r}}
+
\left( 2\bar{\gamma}^{(2)}_{n,p}
\left(1+\frac{(\alpha r)^{\frac{p'-1}{p}}}{p'-1}\right)  +2\theta_{n,p} \right)       \frac{1}{n^{\frac{1-r}{p}}}
\right),
\end{align}
де $|\theta_{n,p}| < \frac{1272}{169\pi}$.

Враховуючи, що  $2 \left(|\bar{\gamma}^{(2)}_{n, p}| + | \theta_{n,p}| \right)<40\pi^{4}$,  із \eqref{Equation_InProof_Th_Interp6} отримуємо \eqref{Theorem_sup_1<p<infty}.

Нехай, нарешті, $p=\infty$. 

В силу \eqref{I_estimate} та формул \eqref{EstimNorm3} i \eqref{EstimNorm4}, застосованих при $p=\infty$, для усіх номерів $n\geq n_{0}(\alpha,r,\infty)$ і довільних $\alpha>0$, $r\in(0,1)$, $\gamma\in \mathbb{R}$, мають місце оцінки
\begin{align}\label{norm_p=infty_1}
&\frac{1}{\pi}\left \| \sum\limits_{k=n}^{\infty}e^{-\alpha k^{r}}\cos(kt+\xi) \right \|_{1} \notag \\
  =& e^{-\alpha n^{r}}  \left( \frac{4}{\pi^{2}} \ln \frac{\pi n^{1-r}}{\alpha r}
 + \frac{4}{\pi^{2}} \Theta_{\alpha,r,n}
+\gamma_{n,\infty}^{(1)} 
 \left( \ln \frac{\pi n^{1-r}}{\alpha r}
 +  \Theta_{\alpha,r,n} \right) \frac{1}{\alpha r n^{r}}
 +
\gamma_{n,\infty}^{(1)}  
\right),
\end{align}

\begin{align}\label{norm_p=infty_2}
&\frac{1}{\pi}
\inf\limits_{\lambda\in \mathbb{R}}
\left \| \sum\limits_{k=n}^{\infty}e^{-\alpha k^{r}}\cos(kt+\xi) - \lambda \right \|_{1} 
\notag \\  
  =& e^{-\alpha n^{r}}  \left( \frac{4}{\pi^{2}} \ln \frac{\pi n^{1-r}}{\alpha r}
 + \frac{4}{\pi^{2}} \Theta_{\alpha,r,n}
+
\gamma_{n,\infty}^{(2)}  \left( \ln \frac{\pi n^{1-r}}{\alpha r}
 +  \Theta_{\alpha,r,n} \right) \frac{1}{\alpha r n^{r}}
 +
\gamma_{n,\infty}^{(2)} 
\right),
\end{align}
де $|\gamma_{n,\infty}^{(i)}|\leq (14\pi)^{2}$, $i=1,2,$ а $0<\Theta_{\alpha,r,n}<1$.

При $n\geq n_{1}(\alpha,r)$, де $n_{1}(\alpha,r)$ --- найменше натуральне число, яке задовольняє нерівність \eqref{n1}, мають місце оцінки 
\begin{align}\label{Equation_InProof_Th_Interp6}
 &\frac{4}{\pi^{2}} \left(\ln \pi + \Theta_{\alpha,r,n} \right) +|\gamma_{n,\infty}^{(i)}|  
 \left( \ln \frac{\pi n^{1-r}}{\alpha r}
 +  \Theta_{\alpha,r,n} \right) \frac{1}{\alpha r n^{r}} +|\gamma_{n,\infty}^{(i)}|  
 \notag \\
 &<
 \frac{4(1+\ln\pi)}{\pi^{2}} + (14\pi)^{2}\left(\frac{1}{(3\pi)^{3}}+1 \right)<1938.
\end{align}

Із \eqref{norm_p=infty_1}--\eqref{norm_p=infty_2} з урахуванням нерівності $n_{1}(\alpha,r)> n_{0}(\alpha,r,\infty)$, отримуємо, що при $\alpha>0$, $r\in(0,1)$, $\gamma \in \mathbb{R}$ i $n\geq n_{1}(\alpha,r)$
\begin{align}\label{norm_p=infty_3}
\frac{1}{\pi}
\left \| \sum\limits_{k=n}^{\infty}e^{-\alpha k^{r}}\cos(kt+\xi)  \right \|_{1} 
= e^{-\alpha n^{r}}  \left( \frac{4}{\pi^{2}} \ln \frac{n^{1-r}}{\alpha r}
+
\bar{\gamma}_{n,\infty}^{(1)}  
\right),
\end{align}
\begin{align}\label{norm_p=infty_4}
\frac{1}{\pi}
\inf\limits_{\lambda\in \mathbb{R}}
\left \| \sum\limits_{k=n}^{\infty}e^{-\alpha k^{r}}\cos(kt+\xi) - \lambda \right \|_{1} 
= e^{-\alpha n^{r}}  \left( \frac{4}{\pi^{2}} \ln \frac{ n^{1-r}}{\alpha r}
+
\bar{\gamma}_{n,\infty}^{(2)}  
\right),
\end{align}
де $|\bar{\gamma}_{n,\infty}^{(i)}  |<1938$, $i=1,2$.

Застосувавши формулу \eqref{norm_p=infty_4} при $\xi=\gamma_{n}$, де $\gamma_{n}$ означена рівністю \eqref{gamma_n} і врахувавши \eqref{Ineq_comparison_n}, із \eqref{Equation_InProof_Th_Interp1}, \eqref{Equation_InProof_Th_Interp2} та \eqref{rNAdditEstimate} для $n\geq n_{*}(\alpha,r, \infty)$ маємо
\begin{align}\label{Equation_InProof_Th_Interp7}
&\tilde{\mathcal{E}}_{n}(C^{\alpha,r}_{\beta, \infty};x)
=
2 \left|\sin \frac{2n-1}{2}x \right| 
  \left( e^{-\alpha n^{r}}  \left( \frac{4}{\pi^{2}} \ln \frac{ n^{1-r}}{\alpha r}
+
\bar{\gamma}_{n,\infty}^{(2)}  
\right)+ \theta_{n,\infty}e^{-\alpha n^{r}} 
\right)
\notag \\
=&
 e^{-\alpha n^{r}} \left|\sin \frac{2n-1}{2}x \right| 
  \left(    \frac{8}{\pi^{2}} \ln \frac{ n^{1-r}}{\alpha r}
+
2\left(\bar{\gamma}_{n,\infty}^{(2)}  
+ \theta_{n,\infty}\right) 
\right),
\end{align}
де $|\theta_{n,\infty}| < \frac{1272}{169\pi}$.

Врахувавши, що  $2 \left(|\bar{\gamma}^{(2)}_{n, \infty}| + | \theta_{n,\infty}| \right)<40\pi^{4}$, то із \eqref{Equation_InProof_Th_Interp7} отримуємо \eqref{Theorem_sup_p=infty}.
Теорему~\ref{theorem_sup_Interpolation} доведено.
\end{proof}

Зауважимо, що оцінка \eqref{Theorem_sup_p=infty} Теореми~\ref{theorem_sup_Interpolation} уточнює оцінку (45)  роботи \cite{Serdyuk2004}.

Також зазначимо, що оцінки  \eqref{Theorem_sup_p=1}--\eqref{Theorem_sup_p=infty}, з яких випливають асимптотичні рівності величин $ \widetilde{{\cal E}}_{n}(C^{\alpha,r}_{\beta,p};x)$, $1\leq p\leq \infty$, $\alpha>0$, $\beta\in\mathbb{R}$, $r\in(0,1)$ при $n\rightarrow \infty$, є інтерполяційними аналогами, отриманих в  \cite{SerdyukStepanyuk2018} (див. також \cite{SerdyukStepanyukDopov}, \cite{SerdyukStepanyuk2017}, \cite{SerdyukStepanyuk_Interpolation2023}) відповідних оцінок для величин
\begin{equation}\label{FourierSums_definition}
{\mathcal{E}}_{n}(C^{\alpha,r}_{\beta,p})_{C}=\sup\limits_{f\in C^{\alpha,r}_{\beta,p} } 
\left\| f- S_{n-1}(f)\right\|_{C},
\end{equation}
де $ S_{n-1}(f)$ --- частинні суми Фур'є порядку $n-1$ функції $f$.

Співставивши формули  \eqref{Theorem_sup_p=1}--\eqref{Theorem_sup_p=infty} даної роботи з оцінками (17), (18), (30) роботи \cite{SerdyukStepanyuk2018}, переконуємось у виконанні граничного співвідношення для величин \eqref{quantityInterpol} i \eqref{FourierSums_definition} при $n\rightarrow \infty$
\begin{equation}\label{Limit_Relation1}
\lim\limits_{n\rightarrow\infty}\frac{\widetilde{{\cal E}}_{n}(C^{\alpha,r}_{\beta,p};x)}{\left|\sin \frac{2n-1}{2}x \right| {\mathcal{E}}_{n}(C^{\alpha,r}_{\beta,p})_{C}  }=2, \ 1\leq p\leq\infty, \ \alpha>0, \ r\in(0,1), \ \beta\in\mathbb{R}.
\end{equation}

Для класів функцій скінченної гладкості ситуація принципово інша. 

Нехай, як і раніше, $W^{r}_{\infty}$ --- клас $2\pi$--періодичних функцій $f$ таких, що їх похідні $f^{(k)}$ до $(r-1)$  порядку включно абсолютно неперервні, а $\| f^{(r)}\|_{\infty}\leq1$.
Як показав А.М. Колмогоров \cite{Kolmogorov1935}
\begin{equation}\label{KolmogorovAsymp}
{\mathcal{E}}_{n}(W^{r}_{\infty})_{C}= 
\sup\limits_{f\in W^{r}_{\infty} } 
\left\| f- S_{n-1}(f)\right\|_{C}=
\frac{4}{\pi^{2}}\frac{\ln n}{n^{r}} +
\mathcal{O}\left(\frac{1}{n^{r}}\right), \ \ r\in\mathbb{N}.
\end{equation}

 Співставлення оцінки Колмогорова \eqref{KolmogorovAsymp} для сум Фур'є з оцінкою Нікольського \eqref{Nikolsky1945} для інтерполяційних поліномів дозволяє записати при $x\neq \frac{2k\pi}{2n-1}$, $k\in \mathbb{Z}$, граничне співвідношення
  \begin{equation}\label{Limit_Relation2_SobolevClasses}
\lim\limits_{n\rightarrow\infty}\frac{\widetilde{{\cal E}}_{n}(W^{r}_{\infty};x)}{\left|\sin \frac{2n-1}{2}x \right| {\mathcal{E}}_{n}(W^{r}_{\infty})_{C}  }=2\sum\limits_{v=0}^{\infty}\frac{(-1)^{v(r+1)}}{(2v+1)^{r+1}}, \ r\in\mathbb{N}.
\end{equation}

Як видно із \eqref{Limit_Relation2_SobolevClasses}  границя відношення  $\frac{\widetilde{{\cal E}}_{n}(W^{r}_{\infty};x)}{\left|\sin \frac{2n-1}{2}x \right| {\mathcal{E}}_{n}(W^{r}_{\infty})_{C}  })$  при $n\rightarrow\infty$ залежить від показника гладкості.

Стосовно уточнень та узагальнень оцінки \eqref{KolmogorovAsymp} див., наприклад, \cite{Stepanets1}, \cite{Stechkin1980}, \cite{SerdyukSokolenko2022}, \cite{Teljakovsky1989} та ін.

\section{Доведення Леми~\ref{Lemma1}}\label{LemmaProof}

\begin{proof}[Доведення Леми~\ref{Lemma1}]

Нехай $n\in\mathbb{N}$ такий, що виконується нерівність \eqref{NumberIneq}.
 Покажемо, що при вказаних $n$ має місце оцінка
\begin{equation}\label{Lemma1Estim1}
\sum\limits_{k=1}^{\infty}\sum\limits_{v=(2k+1)n-k}^{\infty}e^{-\alpha v^{r}}
<
\frac{636}{169}\frac{n^{1-r}}{\alpha r } e^{-\alpha (3n-1)^{r}}.
\end{equation}

Перш за все зауважимо, що  для довільної додатньої і спадної функції $\xi(u)$ такої, що $\int\limits_{m}^{\infty}\xi(u) du<\infty$ має місце співвідношення
\begin{equation}\label{XiInequality}
\sum\limits_{j=m}^{\infty}\xi(j)<\xi(m)+\int\limits_{m}^{\infty}\xi(u) du.
\end{equation}
У роботі \cite{SerdyukStepanyuk2018} (формула (22)) була встановлена наступна оцінка:
\begin{equation}\label{Lemma1Estim2}
\int\limits_{m}^{\infty}e^{-\alpha t^{r}}t^{\delta}dt
=\frac{e^{-\alpha m^{r}}}{\alpha r}m^{\delta+1-r}
\left(1+
\Theta_{\alpha,m}^{r,\delta}\frac{|\delta+1-r|}{\alpha r}\frac{1}{m^{r}}
\right), \ \ 
|\Theta_{\alpha,m}^{r,\delta}|\leq \frac{14}{13},
\end{equation}
яка виконується при всіх $\alpha>0$, $r>0$, $\delta\in\mathbb{R}$ i номерах $m$ таких, що $m\geq \left(\frac{14|\delta+1-r|}{\alpha r}\right)^{\frac{1}{r}}$.

В силу \eqref{XiInequality}, \eqref{Lemma1Estim2} і умови \eqref{NumberIneq},
\begin{align}\label{Lemma1Estim3}
&\sum\limits_{v=(2k+1)n-k}^{\infty}e^{-\alpha v^{r}}
<
e^{-\alpha ( (2k+1)n-k )^{r}}+\int\limits_{(2k+1)n-k}^{\infty} 
e^{-\alpha t^{r}}  dt
\notag \\
\leq&
\frac{e^{-\alpha ( (2k+1)n-k )^{r}}  }{\alpha r} ((2k+1)n-k )^{1-r}
\left( \frac{\alpha r}{ ((2k+1)n-k )^{1-r}}+1+\frac{14}{13}\frac{1-r}{ \alpha r((2k+1)n-k )^{r}}
\right) \notag \\
<&\frac{14}{13} \frac{e^{-\alpha ( (2k+1)n-k )^{r}}  }{\alpha r}(2k+1)n-k )^{1-r}.
\end{align}

В силу  \eqref{NumberIneq}  і монотонного спадання функції $e^{-\alpha t^{r}} t^{1-r}$  на $[n,\infty)$ при  $n$, що задовольняють умову \eqref{NumberIneq}, з урахуванням \eqref{Lemma1Estim3} та рівності \eqref{Lemma1Estim2}, застосованої при $\delta=1-r$,  одержуємо
\begin{align}\label{Lemma1Estim4}
&\sum\limits_{k=1}^{\infty}\sum\limits_{v=(2k+1)n-k}^{\infty}e^{-\alpha v^{r}}
<
\frac{14}{13}\frac{1}{\alpha r} \sum\limits_{k=1}^{\infty}
 e^{-\alpha ( (2k+1)n-k )^{r}} (2k+1)n-k )^{1-r} \notag \\
 <&
 \frac{14}{13}\frac{1}{\alpha r}
 \left( e^{-\alpha(3n-1)^{r}}(3n-1)^{1-r}+
\int\limits_{1}^{\infty} e^{-\alpha ((2n-1)u+n)^{r}}
 (2n-1)u+n )^{1-r} du
  \right)
  \notag \\
= & 
\frac{14}{13}\frac{1}{\alpha r}
 \left( e^{-\alpha(3n-1)^{r}}(3n-1)^{1-r}+
 \frac{1}{2n-1}\int\limits_{3n-1}^{\infty} e^{-\alpha t^{r}}
t^{1-r} dt
  \right)
   \notag \\
\leq & 
\frac{14}{13}\frac{1}{\alpha r}
 \left( e^{-\alpha(3n-1)^{r}}(3n-1)^{1-r}+
 \frac{1}{2n-1} \frac{e^{-\alpha(3n-1)^{r}} }{\alpha r}
 (3n-1)^{2(1-r)}  \left(1+ \frac{14}{13}\frac{2(1-r)}{\alpha r(3n-1)^{r}} \right) 
  \right)
   \notag \\
< & 
\frac{14}{13}\frac{1}{\alpha r}
 e^{-\alpha(3n-1)^{r}}(3n-1)^{1-r}
 \left( 
1+\frac{(3n-1)^{1-r}}{(2n-1)\alpha r} \left(1+ \frac{14}{13}\frac{2}{\alpha r(3n-1)^{r}} \right) 
 \right) 
  \notag \\
= & 
\frac{14}{13}\frac{1}{\alpha r}
 e^{-\alpha(3n-1)^{r}}(3n-1)^{1-r}
 \left( 
1+\frac{3n-1}{2n-1} \frac{1}{\alpha r (3n-1)^{r}} \left(1+ \frac{28}{13}\frac{1}{\alpha r(3n-1)^{r}} \right) 
 \right) 
  \notag \\
< & 
\frac{14}{13}\frac{1}{\alpha r}
 e^{-\alpha(3n-1)^{r}}(3n-1)^{1-r}
 \left( 
1+2\cdot\frac{1}{14}   \left( 1+2\cdot\frac{14}{13}\cdot\frac{1}{14}\right)\right) 
  \notag \\
= &\frac{212}{169} e^{-\alpha(3n-1)^{r}}  \frac{(3n-1)^{1-r}}{\alpha r}<
\frac{636}{169}\frac{n^{1-r}}{\alpha r}  e^{-\alpha(3n-1)^{r}}.
\end{align}

Лему~\ref{Lemma1} доведено.
\end{proof}

Дана робота частково підтримана грантом H2020-MSCA-RISE-2019, номер проєкту 873071 (SOMPATY: Spectral Optimization: From Mathematics to Physics and Advanced Technology), а також фондом Фольсквагена (VolkswagenStiftung),  програмою “From Modeling and Analysis to Approximation”.


\vspace{1cm}

\end{document}